\title{Metric Lines in the Special Euclidean group on the plane}
 \author[Wang]{Yuyang Wang}
  \address{Y. Wang;
 {University of Michigan, 530 Church St, Ann Arbor, MI 48109, United States
 \\
\href{wangyy@umich.edu}{wangyy@umich.edu} }
 }
 \author[Ku]{Sean Ku }
  \address{S. Ku;
 {New York University, Courant Institute of Mathematical 
Sciences, 
251 Mercer St, New York, NY 10012, United States
 \\
 \href{sk8980@nyu.edu}{sk8980@nyu.edu}} 
 }
 \author[Bravo-Doddoli]{Alejandro Bravo-Doddoli}
  \address{Alejandro Bravo-Doddoli;
 {University of Michigan, 530 Church St, Ann Arbor, MI 48109, United States
 \\
\href{abravodo@umich.edu}{abravodo@umich.edu}}
 }
\pgfplotsset{width=11cm,compat=1.9}
\newtheorem{Theorem}{Theorem}[section]
\newtheorem{defi}[Theorem]{Definition}
\newtheorem{Prop}[Theorem]{Proposition}
\newtheorem{lemma}[Theorem]{Lemma}
\newtheorem{Remark}[Theorem]{Remark}
\theoremstyle{plain}
\numberwithin{figure}{section}
\newtheorem{mainthm}{Theorem}
\newtheorem*{backgroundtheorem}{Background Theorem}
\DeclareMathOperator{\SE2}{SE(2)}
\DeclareMathOperator{\SO2}{SO(2)}
\DeclareMathOperator{\Euclidn}{SE(n)}
\DeclareMathOperator{\Euclid3}{SE(3)}
\newcommand{\sea}{\mathfrak{se}(2)}
\def\R{\mathbb{R}}
\def\D{\mathcal{D}}
\def\Ri{Riemannian }
\def\SR{Sub-Riemannian } 
\def\sR{sub-Riemannian }
\begin{document}

\begin{abstract}
The Special Euclidean group on the plane $\SE2$ has a left-invariant \sR structure. Every \sR manifold possesses a Hamiltonian function governing the \sR geodesic flow. Two natural questions are: What are the necessary conditions for \sR geodesics to be periodic? What type of geodesics are the metric lines in $\SE2$? In this article, we answer both questions, and our method to answering the second is using the Hamilton-Jacobi theory.

\end{abstract}

\maketitle

\tableofcontents

\section{Introduction} 

The Euclidean special group, denoted by $\SE2$, has the structure of a \sR manifold. A \sR geodesic flow is a Hamiltonian system on the cotangent bundle $T^*\SE2$ with the property that a solution projected to $\SE2$ is a \sR geodesic, i.e., locally minimizing horizontal curve. A classic problem in a general Hamiltonian system is the conditions for periodic solutions, and an essential question in \sR geometry is the classification of metric lines. The main goal of this paper is to characterize periodic \sR geodesics and metric lines.

To endow $\SE2$ with a \sR structure, we consider the non-integrable distribution $\D$ framed by the left-invariant vector fields $\{X_{\theta}, X_u \}$, see equation \eqref{eq:left-inv} below, and declare them to be orthonormal. Thus, we obtain a left-invariant \sR metric in $\SE2$, consult \cite{Arnold} or \cite{agrachev} for more details on left-invariant metric. In general, if $\mathbb{G}$ is a Lie group with a left-invariant \sR metric, a standard approach to study the \sR geodesic flow is to consider the symplectic reduction of $T^*G$ by $\mathbb{G}$. In the particular case of $\SE2$, the semidirect group structure of $\SE2$ given by $\SE2 = \SO2 \ltimes \R^2$ plays a primary role, consult \cite{marsden-weinstein} for the general theory of symplectic reductions, and \cite{semi-direct} or \cite[Chapter 4]{marsden2007hamiltonian} for the theory in the case of semidirect products. However, we will use an alternative method; the group $\SE2$ has the structure of a metabelian Carnot group, i.e., the commutator group $[\SE2,\SE2] \simeq \R^2 $ is abelian. Thus, we will consider the Hamiltonian action of $\R^2$ and perform the symplectic reduction of $T^*\SE2$ by $\R^2$,  where the reduced space $T^*\SE2 // \R^2_{\mu}$ is symplectic diffeomorphic to $T^*\SO2$; refer to \cite{ABD-sympletic} for a detailed discussion of symplectic reductions in the case of metabelian groups.

The first consequence of the symplectic reduction by $\R^2$ is the primary tool in our work, a bijection between \sR geodesics in $\SE2$ and curves $\alpha_{\mu}$ in $T^*\SO2$ depending on a parameter $\mu$ in $\R^2$, where we identify $(\R^2)^*$ with $\R^2$. Let us consider the coordinates $(p_{\theta},\theta)$ in $T^*\SO2$, then the curve $\alpha_{\mu}$ is defined by the equation 
\begin{equation}\label{eq:alp-cur}
 \alpha_{\mu} := \{ (p_{\theta},\theta) \in  T^*\SO2 :    1 = p_{\theta}^2 + R^2\cos^2( \theta -\delta) \} ,  
\end{equation}
where $\mu = (R,\delta)$.

In sub-sub-Section \ref{subsub:back-ground}, we provide a prescription to build a curve in $\SE2$ given a curve $\alpha_{\mu}$; the \textbf{Background Theorem} states that the prescription yields \sR geodesic in $\SE2$ parameterized by arc-length. Conversely, the prescription can achieve every arc-length parameterized geodesic in $\SE2$ by applying it to some curve $\alpha_{\mu}$.  

The first main theorem classifies the periodic geodesics over $\SE 2$. 

\begin{mainthm} \label{main:the-periodic}
The corresponding geodesic to $\alpha_{\mu}$ with $\mu = (R,\delta)$ is periodic if and only if $R=0$.  
\end{mainthm}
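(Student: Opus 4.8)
The plan is to reconstruct the full geodesic in $\SE2$ from the reduced curve $\alpha_{\mu}$ and then isolate a single monotone quantity that settles both directions of the equivalence at once. First I would identify the conserved momenta of the $\R^2$-reduction with the translation momenta $(p_x,p_y)$, written in polar form $(p_x,p_y)=(R\cos\delta,R\sin\delta)$, so that $\mu=(R,\delta)$. With $P_2:=p_x\cos\theta+p_y\sin\theta=R\cos(\theta-\delta)$, the unit level set of the Hamiltonian $H=\tfrac12(p_\theta^2+P_2^2)$ is exactly the defining equation \eqref{eq:alp-cur} of $\alpha_{\mu}$. I would then record that the prescription of the \textbf{Background Theorem} produces the translation part of the geodesic by integrating Hamilton's equations $\dot x=P_2\cos\theta$ and $\dot y=P_2\sin\theta$ along the reduced flow $\dot\theta=p_\theta$ carried by $\alpha_{\mu}$.

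For the implication $R=0\Rightarrow$ periodic, I would note that $R=0$ forces $P_2\equiv 0$, hence $\dot x=\dot y=0$ and the translation part is frozen, while \eqref{eq:alp-cur} gives $p_\theta^2=1$, so $\dot\theta=p_\theta=\pm1$ and $\theta(t)=\theta_0\pm t$. The resulting geodesic spins in place and is plainly $2\pi$-periodic.

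For the converse I would assume $R\neq 0$ and study the projection of the translation part onto the fixed direction $\delta$, namely $f(t):=x(t)\cos\delta+y(t)\sin\delta$. A direct computation gives $\dot f=P_2(\cos\theta\cos\delta+\sin\theta\sin\delta)=R\cos^2(\theta-\delta)\ge 0$. I claim $f$ is strictly increasing: $\dot f$ vanishes only where $\cos(\theta-\delta)=0$, but at such an instant \eqref{eq:alp-cur} forces $p_\theta^2=1\neq0$, so $\dot\theta\neq0$ and the zeros of $\dot f$ are isolated in time. Hence $f$ is strictly monotone, so $(x(t),y(t))$ can never return to its initial value and the geodesic is not periodic; combined with the previous paragraph this proves the theorem.

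The step I expect to be the crux — really the single idea the argument turns on — is the choice of the monotone observable $f$. The reduced dynamics on $\alpha_{\mu}$ is a pendulum whose qualitative behavior changes across the regimes $R<1$ (rotation), $R=1$ (separatrix), and $R>1$ (libration), which tempts one into a case-by-case evaluation of a drift integral $(\Delta x,\Delta y)$ over one reduced period. Projecting onto the direction $\delta$ collapses all three regimes into the single inequality $\dot f=R\cos^2(\theta-\delta)\ge 0$, so the only remaining work is to confirm that the translation equations are precisely those furnished by the prescription and that the zero set of $\dot f$ is discrete; both follow immediately from \eqref{eq:alp-cur}.
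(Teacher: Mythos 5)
Your proof is correct, and it shares with the paper's proof the single idea that matters: pairing the translational displacement with the fixed direction $(\cos\delta,\sin\delta)$, which turns the drift into the manifestly nonnegative quantity $R\cos^2(\theta-\delta)$. The difference is in how that quantity is deployed. The paper works through the period map of Proposition \ref{prp:period-map}: it considers geodesics of the type $\theta$-periodic, observes that periodicity forces $\Delta x(\mu)=\Delta y(\mu)=0$, and then computes
\begin{equation*}
a\,\Delta x(\mu)+b\,\Delta y(\mu)
= R^2\int_{I_{\mu}}\frac{\cos^2(\theta-\delta)\,d\theta}{\sqrt{1-R^2\cos^2(\theta-\delta)}},
\end{equation*}
which is (up to the time-to-$\theta$ change of variables) a positive multiple of your $\dot f$ integrated over one $\theta$-period; positivity of the integral forces $R=0$. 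You instead run the same projection pointwise: $\dot f=R\cos^2(\theta-\delta)\ge 0$, with zeros isolated in time because $p_\theta^2=1$ whenever $\cos(\theta-\delta)=0$, so $f$ is strictly increasing along any geodesic with $R\neq 0$. This buys you two things: you never need the quadrature formulas of Proposition \ref{prp:period-map}, and you need no case analysis at all --- your monotonicity argument excludes periodicity for line and heteroclinic geodesics ($R=1$) in the same stroke, whereas the paper's converse, as written, only addresses the $\theta$-periodic type and leaves the (easy, but logically necessary) exclusion of the other two types implicit. What the paper's route buys in exchange is that it exercises machinery --- the period map and its translation increments --- which it reuses later in the cut-time analysis of Proposition \ref{prop:cut-time}, so no separate lemma is wasted on the periodicity theorem alone.
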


The proof of Theorem \ref{main:the-periodic} relies on the period map from Proposition \ref{prp:period-map}. 
To introduce our second main result, let us formalize the definition of a metric line.

\begin{defi}
\label{def:metric-line}
 Let $M$ be a \sR manifold, let $dist_{M}(\cdot,\cdot)$ be the \sR distance on $M$, and let $|\cdot|:\R \to [0,\infty)$ be the absolute value. We say that a curve $\gamma:\R \to M$ is a metric line if it is a globally minimizing geodesic, i.e.,  
$$|a-b| = dist_{M}(\gamma(a),\gamma(b))\;\;\; \text{for all compact intervals}\;\; [a,b] \subset \R. $$
\end{defi}
Alternative terms for \enquote{metric lines} are:  \enquote{globally minimizing geodesics}, \enquote{isometric embeddings of the real line}, or \enquote{infinite-geodesics.}

In \cite{RM-ABD,ABD-metric}, A. Bravo-Doddoli and R. Montgomery used the metabelian structure of the Carnot group $J^k(\R,\R)$ to provide a partial result to the classification of metric lines. We will follow their approach. In sub-sub-Section \ref{sub-sec:red-dyn}, we will compute the reduced Hamiltonian $H_{\mu}: T^*\SO2 \to \R$ and classify the \sR geodesics in $\SE2$ according to their reduced dynamics,  this classification is equivalent to classifying the curve $\alpha_{\mu}$, see Lemma \ref{lem:alp-clas} below. Then, we will show that a \sR geodesic is one of the following types: line, $\theta$-periodic, or heteroclinic geodesics. See sub-sub-Section \ref{subsubsec:clas-geo} for formal definitions.

In \cite{Moiseev2010}, I. Moiseev and Y. Sachkov used optimal synthesis to study the cut times of the \sR geodesics of the Special Euclidean group $\SE2$. One of their main contributions was proving that all the \sR geodesics of the type heteroclinic are metric lines. In \cite{Bicycle-paths}, A. Ardentov, G. Bor, E. Le Donne, R. Montgomery, and Y. Sachkov interpreted the \sR geodesics as a bicycle path. They presented an alternative proof to show that the heteroclinic geodesics are metric lines by using an isometry between the geodesic of type lines and heteroclinic. The second goal of this paper is to use Hamilton-Jacobi theory to provide a complete classification of metric lines on $\SE2$ and then give a third proof to I. Moiseev and Y. Sachkov's result.

The second main theorem precisely classifies the metric lines over $\SE 2$. 
\begin{mainthm} \label{main:the-1}
The metric lines in $\SE2$ are precisely the geodesics of the type line and heteroclinic.    
\end{mainthm}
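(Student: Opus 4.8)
The plan is to establish the two directions of the word \enquote{precisely} separately. The positive direction (lines and heteroclinic geodesics \emph{are} metric lines) I would obtain from the \sR calibration principle, which is the Hamilton--Jacobi content the paper advertises. Recall the principle: if $f\colon\SE2\to\R$ is smooth with $H(df)\le\tfrac12$ everywhere, where $H$ is the \sR Hamiltonian, then for any horizontal curve $c\colon[a,b]\to\SE2$ the pointwise Cauchy--Schwarz bound $df(\dot c)\le\|\dot c\|$ integrates to $\operatorname{length}(c)\ge f(c(b))-f(c(a))$. Hence if an arc-length geodesic $\gamma$ satisfies $df(\dot\gamma)\equiv 1$, then $\gamma|_{[a,b]}$ realizes the distance between its endpoints for every $[a,b]$, so $\gamma$ is a metric line.

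For the positive direction I would write down the calibration explicitly. In the standard coordinates $(\theta,x,y)$ on $\SE2$, the conserved $\R^2$-momentum is $\mu=(p_x,p_y)$ with $R=\sqrt{p_x^2+p_y^2}$ and $\delta=\arg(p_x,p_y)$, so that $p_x\cos\theta+p_y\sin\theta=R\cos(\theta-\delta)$ and the reduced level set $H_\mu=\tfrac12$ is exactly the curve $\alpha_\mu$ of \eqref{eq:alp-cur}. For each $\delta\in\SO2$ set
\begin{equation*}
 f_\delta(\theta,x,y):=x\cos\delta+y\sin\delta-\cos(\theta-\delta).
\end{equation*}
A direct computation gives $df_\delta=\sin(\theta-\delta)\,d\theta+\cos\delta\,dx+\sin\delta\,dy$, whence $H(df_\delta)=\tfrac12\big(\sin^2(\theta-\delta)+\cos^2(\theta-\delta)\big)=\tfrac12$, so $f_\delta$ is a \emph{global} smooth solution of the eikonal equation $H(df)=\tfrac12$ (global because the $\cos(\theta-\delta)$ term is $2\pi$-periodic in $\theta$). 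Its characteristic is precisely the separatrix $\alpha_\mu$ with $R=1$ and momentum $p_\theta=\sin(\theta-\delta)$, and one checks $df_\delta(\dot\gamma)\equiv 1$ both on the equilibrium $\theta\equiv\delta$ (a straight line, type \emph{line}) and on the heteroclinic orbit $\theta\colon\delta\to\delta+\pi$. By the calibration principle these are metric lines; reversing orientation (replacing $f_\delta$ by $-f_\delta=f_{\delta+\pi}$) covers the remaining branch, so every line and heteroclinic geodesic is calibrated.

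The role of $R=1$ deserves emphasis and motivates the negative direction. Looking for a separated solution $f=p_xx+p_yy+g(\theta)$ forces $g'(\theta)^2=1-R^2\cos^2(\theta-\delta)$. At $R=1$ this is $\sin^2(\theta-\delta)$, and the two branches $g'=\pm\sqrt{\cdots}$ meet at the zeros $\theta=\delta,\delta+\pi$, allowing the smooth sign-changing choice $g'=\sin(\theta-\delta)$ with $g=-\cos(\theta-\delta)$ single-valued on $\SO2$. For $R\ne 1$ no such single-valued smooth $g$ exists: when $0\le R<1$ the integrand never vanishes, its sign is fixed, and $g(\theta+2\pi)-g(\theta)=\int_0^{2\pi}\!\sqrt{1-R^2\cos^2}\ne 0$ is multivalued; when $R>1$ the radicand changes sign and $p_\theta$ fails to be a graph over $\SO2$ (the reduced orbit librates and turns around). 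This is the Hamilton--Jacobi obstruction to calibrating a $\theta$-periodic geodesic.

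The obstruction above is only suggestive, and turning it into a proof of \emph{non}-minimality is the main obstacle, since the calibration method yields sufficiency but not necessity. To close the negative direction rigorously I would show each $\theta$-periodic geodesic ($R\ne1$, including the pure rotations $R=0$) has a finite cut time. For $R=0$ the geodesic is closed by Theorem \ref{main:the-periodic}, so it is trivially not a metric line. For $0<R<1$ and $R>1$ the reduced flow of $H_\mu$ is periodic, and the reflection symmetry $(\theta-\delta,p_\theta)\mapsto(-(\theta-\delta),-p_\theta)$ of the pendulum $H_\mu$ induces a symmetry of the geodesic exponential map producing, at a finite Maxwell time $t_{\mathrm{Max}}(R)<\infty$, a second geodesic of equal length joining the same pair of points; past $t_{\mathrm{Max}}$ the geodesic is not minimizing, hence not a metric line. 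Since the \textbf{Background Theorem} guarantees every arc-length geodesic is a line, heteroclinic, or $\theta$-periodic, the two directions together yield the stated classification and, in particular, recover the family of metric lines of Moiseev--Sachkov \cite{Moiseev2010}.
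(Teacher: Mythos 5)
Your positive direction is correct and coincides with the paper's own argument: your $f_\delta$ is, up to sign convention, exactly the global calibration $S^{\pm}_{\delta}$ of equation \eqref{eq:glo-cal-fun}, your \enquote{calibration principle} is Proposition \ref{prp:cal-func-region} (together with Lemma \ref{lem:cal-func}), and the verifications $H(df_\delta)=\tfrac{1}{2}$ and $df_\delta(\dot\gamma)\equiv 1$ on line and heteroclinic geodesics are the same computations as in Proposition \ref{prop:glo-cal-fun}. Your observation that no single-valued smooth separated solution of the Eikonal equation exists for $R\neq 1$ likewise mirrors the first half of that proposition.

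The genuine gap is in the negative direction, which you assert rather than prove, and which fails as formulated for inflectional geodesics. The reflection $(\theta-\delta,p_\theta)\mapsto(-(\theta-\delta),-p_\theta)$ corresponds to the isometry $\Phi(\theta,x,y)=(2\delta-\theta,\,\mathrm{refl}_\delta(x,y))$ of $\SE2$, where $\mathrm{refl}_\delta$ is the reflection of the plane about a line of direction $\delta$. To extract a Maxwell point from $\Phi$ you need an anchor point $g_0$ on the geodesic with $\Phi(g_0)=g_0$, so that $\gamma$ and $\Phi\circ\gamma$ are two distinct equal-length geodesics from $g_0$; the fixed-point condition forces $\theta_0\in\{\delta,\delta+\pi\}$. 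But for $R>1$ the Hill region is $\{\theta:\ |\cos(\theta-\delta)|\le 1/R\}$, which contains neither $\delta$ nor $\delta+\pi$, so $\Phi$ fixes no point of any $\theta$-periodic geodesic with $R>1$, and composing $\Phi$ with plane translations cannot repair this since translations do not move the $\theta$-coordinate. Hence your single named symmetry cannot even begin the argument in the inflection case; and for $0<R<1$, where the anchor $\theta_0=\delta$, $p_\theta=\pm\sqrt{1-R^2}\neq 0$ does exist, you still owe the verification that $\gamma$ and $\Phi\circ\gamma$ meet again at a finite time, which is precisely the period-map computation you skipped. The paper's Proposition \ref{prop:cut-time} handles all cases uniformly and differently: it takes $\tilde\gamma$ with reduced initial data $(-\dot\theta_0,\theta_0)$ (a time-reversal symmetry, which shares the initial point for every $R$ whenever $\dot\theta_0\neq 0$), invokes Proposition \ref{prp:period-map} (that $\Delta x(\mu)$ and $\Delta y(\mu)$ are independent of the initial point) to conclude $\gamma(L)=\tilde\gamma(L)$, and disposes of the remaining turning-point case $\dot\theta_0=0$ (possible only for $R>1$, where time reversal reproduces $\gamma$ itself) by exhibiting an explicit Jacobi field that makes $\gamma(L)$ conjugate to $\gamma(0)$. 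To close your sketch you would either adopt this time-reversal symmetry, or replace your reflection by the other pendulum reflection $\theta-\delta\mapsto\pi-(\theta-\delta)$, whose fixed locus $\theta=\delta+\tfrac{\pi}{2}$ lies inside every Hill region, and in either case actually carry out the meeting-time argument instead of asserting the existence of $t_{\mathrm{Max}}(R)<\infty$.
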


The proof to Theorem \ref{main:the-1} consists of two parts: proving that the geodesics of the type heteroclinic and line are metric lines and showing that the geodesics of the type $\theta$-periodic are not globally minimizing. Our method to prove the first part is to find a calibration function by solving the Hamilton-Jacobi equation. For more details on Hamilton-Jacobi theory, refer to \cite[Section 47]{Arnold} or \cite[Section 47]{Landau}. A calibration function is a tool from weak KAM theory. Consult \cite{fathi2008weak,aub-Ham-eq} for the general weak KAM theory, refer to \cite[sub-sub-Chapter 1.9.2]{tour}, and to \cite[Section 5]{RM-ABD} for more details of the calibration function in the context of \sR geometry. The second part of Theorem \ref{main:the-1} is a consequence of Proposition \ref{prop:cut-time}, which states that geodesics of type $\theta$-periodic do not minimize beyond their $\theta$-period.






\subsection*{Organization of the paper}

Section \ref{Sec:se2-sR} introduces the group $\SE2$ as a \sR manifold. It also describes the metabelian structure. Sub-Section \ref{sec:cot-bund} presents the Hamiltonian function governing the \sR geodesic flow. It explains symplectic reduction by the normal subgroup $\R^2$ and writes down the reduced Hamiltonian $H_{\mu}$. It states and proves the \textbf{Background Theorem}. It describes some symmetries of the \sR geodesic flow and classifies the \sR geodesics. Sub-Section \ref{subsec:proof-periodic} proves Theorem \ref{main:the-periodic}.

Section \ref{sec:metri-line} study the metric lines in $\SE2$. Sub-Section \ref{sec:cal-fuc} introduces the Hamiltonian-Jacobi equation and its relation to the eikonal equation. It formally defines a calibration function and solves the \sR eikonal equation on $\SE2$. Sub-Section \ref{subsec:min-fuc} presents our method to prove that the type line and heteroclinic geodesics are metric lines. Sub-Section \ref{sec:cut-time} presents the definition of a cut-time and verifies that an upper bound for the cut-time for the $\theta$-periodic geodesics is its $\theta$-period. Sub-Section \ref{Sec:proof-the-main-2} summarizes our results and proves Theorem \ref{main:the-1}. 

The Appendix \ref{ape:mane} shows the relation between the calibration function used to prove Theorem \ref{main:the-1} and Ma\~{n}e's critical value, a fundamental concept from weak KAM theory.

\subsection*{Acknowledgements}
This research was conducted during the summer of 2024 as part of the University of Michigan, Ann Arbor math REU program. We thank Prof. Tasho Kaletha, Annie Winkler, and the faculty at the University of Michigan for organizing this REU. The author, S. Ku, would like to thank Prof. Asaf Cohen for providing funding under the National Science Foundation Grant number DMS-2006305.

\section{The Euclidean group as a \sR manifold}\label{Sec:se2-sR}
\subsection{The Euclidean group as a \sR manifold}

The Special Euclidean group is a $3$-dimensional Lie group. If $(\theta,x,y)$ in $(0,2\pi) \times \R^2$ are local coordinates, then a point $g$ in $\SE2$  has a matrix representation given by
$$ g = \begin{pmatrix}
    cos \theta & -\sin \theta & x \\
    \sin \theta & \cos \theta & y \\
    0 & 0 & 1 \\
\end{pmatrix},$$

The Lie algebra $\sea$ is given by
$$ E_{\theta} = \begin{pmatrix}
    0 & -1 & 0 \\
    1 & 0  & 0 \\
    0 & 0 & 0 \\
\end{pmatrix},\;\; E_{u} = \begin{pmatrix}
    0 & 0 & 1 \\
    0 &  0 & 0 \\
    0 & 0 & 0 \\
\end{pmatrix},\;\text{and}\;E_{v} = \begin{pmatrix}
    0 & 0 & 0 \\
    0 & 0  & 1 \\
    0 & 0 & 0 \\
\end{pmatrix},$$
then the Lie bracket relations are
\begin{equation}\label{eq:Lie-brac-rel}
[E_{\theta},E_u] = E_v,\;\; [E_{\theta},E_v] = -E_u,\;\;\text{and}\;\; [E_u,E_v] = 0.    
\end{equation}

Let us formalize the definition of a metabelian group. 
\begin{defi}
    We say a group $\mathbb{G}$ is metabelian if the commutator group $[\mathbb{G},\mathbb{G}]$ is abelian.
\end{defi}

In the case of a connected  Lie group, being metabelian is equivalent to $[\mathfrak{g},\mathfrak{g}]$ being abelian. Equation \eqref{eq:Lie-brac-rel} implies that the sub-algebra $[\sea,\sea]$ is an abelian ideal, since it is spanned by $\{E_u,E_v\}$. It follows that $\SE2$ is a metabelian Lie group. 

For the rest of the paper, we will write the left-invariant vector fields in terms of the operators $\frac{\partial}{\partial \theta}$, $\frac{\partial}{\partial x}$ and $\frac{\partial}{\partial y}$, rather than use matrix representation. Thus, the left-invariant vector fields are given by
\begin{equation} \label{eq:left-inv}
X_{\theta} = \frac{\partial}{\partial \theta}, \;\; X_u = \cos\theta\frac{\partial}{\partial x} + \sin \theta \frac{\partial}{\partial y}\;\; \text{and} \;\; X_v = -\sin \theta \frac{\partial}{\partial x} + \cos \theta\frac{\partial}{\partial y}.
\end{equation}

The frame $\{ X_\theta, X_u \}$ defines a non-integrable distribution $\D$. We declare the frame orthonormal to equip $\D$ with a \sR inner product. We say that a smooth curve $\gamma(t)$ in $\SE2$ is horizontal if $\dot{\gamma}(t)$ is tangent to $\D$ for all time $t$.

\subsection{\SR geodesic flow}\label{sec:cot-bund}

Let us consider the cotangent bundle  $T^*\SE2$ with the canonical coordinates $(p,g) = (p_{\theta},p_x,p_y,\theta,x,y)$. The left-invariant momentum functions associated to the left-invariant vector fields are given by
\begin{equation}
P_{\theta} = p_{\theta}, \;\; P_u = p_x\cos\theta + p_y \sin \theta \;\; \text{and} \;\; P_v = -p_x\sin \theta + p_y \cos \theta.
\end{equation}
For the definition of momentum functions, consult \cite[sub-Chpater 3.4]{agrachev} or \cite[Deﬁnition 1.5.4]{tour}.

Therefore, the Hamiltonian function governing the \sR geodesic flow is 
\begin{equation}
  \label{eq:ham-function}
  H_{sR}(p,g) = \frac{1}{2} ( P_{\theta}^2 + P_u^2 ) = \frac{1}{2} ( p_{\theta}^2 + ( p_x \cos \theta + p_y \sin \theta )^2 ) .
\end{equation}
For the formal definition of the \sR geodesic flow, refer to \cite[sub-sub-Chapter 4.7.2]{agrachev} or \cite[18, sub-Chapter 1.4]{tour} (an alternative name for the Hamiltonian function is \sR kinetic energy).

We notice that if $(p(t),\gamma(t)) \in T^* \SE 2$ is a solution to the Hamiltonian system defined by equation \eqref{eq:ham-function}, then $\gamma(t)$ is a horizontal curve. Indeed, Hamilton equations and the fact that the momentum functions  $P_\theta$ and $ P_{u}$ are linear in $p_\theta$, $p_x$ and $p_y$ imply
\begin{equation}\label{eq:dif-eq-base}
    \dot{\gamma}(t) = P_\theta X_{\theta}(\gamma(t)) + P_{u} X_{u}(\gamma(t)).
\end{equation}
When we choose the energy level $H(p,g) = \frac{1}{2}$, the corresponding geodesic $\gamma(t)$ is parameterized by arc length. 


Every cotangent bundle $T^*M$, where $M$ is a smooth manifold, possesses a natural Poisson bracket $\{\cdot,\cdot\}$. A Poisson bracket $\{\cdot,\cdot\}$ is a bi-linear operator on the space of smooth functions $C^{\infty}(M)$, with the property that $(C^{\infty}(M),\{\cdot,\cdot\})$ is a Lie algebra, and $\{\cdot,\cdot\}$ is a derivation, i.e., for all $F, G$ and $H$ in $C^{\infty}(M)$ the following expression holds
$$\{FG,H\} = F\{G,H\} + G\{F,H\}.$$
An alternative way to write the evolution of a function $F$ through the Hamiltonian flow of $H$ is given by $\dot{F} = \{F,H\}.$

If $P_X$ and $P_Y$ are the momentum functions associated with the vector fields $X$ and $Y$, then the relation between the Poisson bracket and vector field bracket is given by
 $$\{ P_X,P_Y\} = -P_{[X,Y]}.$$
To compute the differential equations for $P_{\theta}$, $P_{u}$ and $P_v$, we use the above relation to find
\begin{equation}\label{eq:dif-eq-fiber}
    \dot{P}_{\theta} = -P_{u}P_{v},\;\; \dot{P}_{u} =  P_{\theta}P_{v}, \;\;\text{and}\;\; \dot{P}_{v} = -P_{\theta}P_{u}.
\end{equation}

Since the Hamiltonian function $H (p,g)$ does not depend on the variables $x$ and $y$,  $p_x$ and $p_y$ are constant motions. In other words, the Hamiltonian function $H (p,g)$ is invariant under the Hamiltonian action of $\R^2$. Therefore, this action defined a momentum map given by 
$$ J(p,g) = (p_x,p_y) = \mu \in \mathbb{R}^2,$$
where we identify again $(\mathbb{R}^2)^*$ with $\R^2$ itself. For the formal definition of the momentum map, refer to \cite{ortega-ratiu-momentum-maps} or \cite[Appendix 5]{Arnold}. If $(p(t),\gamma(t))$ is a solution of the \sR geodesic flow, then we say that a geodesic $\gamma(t)$ has \textbf{momentum} $\mu$ if $J(p(t),\gamma(t)) = \mu$.

\subsubsection{Reduced dynamics}\label{sub-sec:red-dyn}

The semidirect product structure of $\SE2$ implies that the cotangent bundle $T^*\SE2$ is symplectic diffeomorphic to $T^*\SO2 \times T^* \R^2$, where we identify $T^*\R^2$ with $\R^2 \times \R^2$.   
Let us consider the level set $\mu = (a,b)$, then the inverse image $J^{-1}(\mu)$ is diffeomorphic to $T^*\SO2 \times \R^2 \times \mu$. Being $\R^2$ an abelian group, its isotropic group $\R^2_{\mu}$ is equal to $\R^2$. Thus the reduced space $T^*\SE2 //\R^2_{\mu} := J^{-1}(\mu) / \R^2_{\mu}$ is symplectic diffeomorphic to $T^*\SO2$. If $\pi_{\mu}: T^*\SE2 \to T^*\SO2$ is the symplectic map provided by the symplectic reduction, then the reduced Hamiltonian $H_{\mu}: T^*\SO2 \to \R$ is the unique function such that $H_{sR}|_{J^{-1}(\mu)} = H_{\mu} \circ \pi_{\mu}$. Therefore, the reduced Hamiltonian $H_{\mu}$ is given by 
\begin{equation}\label{eq:red-ham}
    H_{\mu}(p_{\theta},\theta) = \frac{1}{2}(p_{\theta}^2 + R^2 \cos^2 (\theta -\delta ) ), 
\end{equation}
where the bijection between $(a,b)$ and $(R,\delta)$ is given by $(R\cos\delta,R\sin\delta) = (a,b)$. The reduced Hamilton equations are 
\begin{equation}\label{eq:red-ham-eq}
  \dot{p}_\theta = R^2 \cos (\theta -\delta ) \sin(\theta -\delta )  \quad \text{and} \quad \dot{\theta} = p_\theta . 
\end{equation}

We notice that component $\theta$ of the reduced dynamic lies on a closed interval called the Hill interval. Let us introduce its formal definition. 

\begin{defi}\label{def:hill-interval}
    Let $H_{\mu}$ be a reduced Hamiltonian for the parameter $\mu$. We say that $I_{\mu}$ is a hill interval of $H_{\mu}$, if $I_{\mu}$ is a closed interval such that $R^2 \cos^2 (\theta -\delta ) < 1$ for all $\theta$ in the interior of $I_{\mu}$, and $R^2 \cos^2 (\theta -\delta ) = 1$ for $\theta$ in the boundary of $I_{\mu}$. We say that $Hill(\mu)$ is the Hill region of $H_{\mu}$ if $Hill(\mu)$ is the union of the Hill intervals for $H_{\mu}$.
\end{defi}

We think of a point $(p_\theta,\theta)$ in $T^*\SO2$ as a point in the cylinder $\R \times \SO2 \simeq \R \times \mathbf{S}^1$. The level set $H_{\mu}^{-1}( \frac{1}{2})$ is the curve $\alpha_{\mu}$ given by the equation \eqref{eq:alp-cur}.


\subsubsection{Background Theorem} \label{subsub:back-ground}

This sub-Section presents the method for building \sR geodesics and proving the \textbf{Background Theorem}. Let us provide the prescription: consider the initial value problem given by the Hamilton equations \eqref{eq:red-ham-eq} and the initial conditions $\alpha(t_0)$ in $\alpha_{\mu}$. Having found the solution $(p_\theta(t),\theta(t))$, we define a curve $\gamma(t)$ by the differential equation 
\begin{equation}
 \label{eq:hor-lift}   \dot{\gamma}(t) = \dot{\theta}(t) X_{\theta}(\gamma(t)) + R\cos(\theta(t)-\delta) X_{u}(\gamma(t)),
\end{equation}
where we used the reduced Hamilton system, given by equation \eqref{eq:red-ham-eq}, to identify $\dot{\theta}(t)$ with $p_\theta(t)$. The curve $\gamma(t)$ is defined for all time $t$ by the completeness of $H_{\mu}$. The  \textbf{Background Theorem} states that $\gamma(t)$ is a geodesic with momentum $\mu$.

\begin{backgroundtheorem}\label{the:back-grou}
  The above prescription yields a \sR geodesic in $\SE2$ with momentum $\mu$ parameterized by arc length. Conversely, every geodesic in $\SE2$ parameterized by arc length with momentum $\mu$ can be achieved by this prescription applied to the curve $\alpha_{\mu}$.
\end{backgroundtheorem}

\begin{proof} 
Let $\gamma(t)$ be a curve in $\SE2$ defined by equation \eqref{eq:hor-lift} for a fixed value of $\mu$. By construction, the curve $\gamma(t)$ is tangent to the distribution, and by comparing equations \eqref{eq:hor-lift} and \eqref{eq:dif-eq-base}, we conclude that it is enough to prove that the restriction of the left-invariant momentum functions $P_{\theta}(t) $, $P_{u}(t)$, and $P_{v}(t)$ restricted to the level set $J^{-1}(\mu)$ are equal to the functions 
\begin{equation*}
    F_{\theta}(t) = \dot{\theta}(t), \; \; F_{u}(t) = R\cos(\theta(t)-\delta), \;\; \text{and} \; \;  F_{v}(t) = -R\sin(\theta(t)-\delta),
\end{equation*}
respectively. Thus, we must prove that $F_{\theta}(t)$, $F_{u}(t)$, and $F_{\mu}(t)$  satisfy the equations given by \eqref{eq:dif-eq-fiber}. Using the reduced Hamilton equations given by equation \eqref{eq:red-ham-eq}, we have
\begin{equation}\label{eq:proof-back-gro}
    \begin{split}
        \dot{F}_{\theta}(t) & = R^2 \cos(\theta(t)-\delta) \sin(\theta(t) - \delta) =  -F_u(t) F_v(t), \\
        \dot{F}_{u}(t) & = - R  \sin(\theta(t) - \delta) \dot{\theta} =  F_\theta(t)   F_v(t), \\
        \dot{F}_{v}(t) & =  -R  \cos(\theta(t) - \delta) \dot{\theta} = -F_\theta(t) F_u(t). \\
    \end{split}
\end{equation}
Therefore, the equations in \eqref{eq:proof-back-gro} are identical to those from equation \eqref{eq:dif-eq-fiber}. We conclude that $\gamma(t)$ is a \sR geodesic in $\SE2$ with momentum $\mu$ by construction.
    
Conversely, let $\gamma(t)$ be an arbitrary geodesic in $\SE2$ parameterized by arc length with momentum $\mu$. The restriction of the Hamiltonian $H$ to the level set $J^{-1}(\mu)$ is, by definition, the reduced Hamiltonian $H_{\mu}$; the coordinates $p_\theta$ and $\theta$ satisfy the reduced Hamiltonian equations \eqref{eq:red-ham-eq}.  In addition, the momentum functions $P_{\theta}(t) $ and $P_{u}(t)$ restricted to the level set $J^{-1}(\mu)$ have the form  $P_{\theta}(t) = \dot{\theta}$ and $P_{u}(t) = R \cos(\theta(t) - \delta)$, and the equation \eqref{eq:dif-eq-base} is identical to \eqref{eq:hor-lift}. Thus, $\gamma(t)$ is achieved by the prescription applied to the curve $\alpha_{\mu}$. 
\end{proof} 

The following proposition describes a property of curves in $\R^2$ which are \sR geodesics projected by the canonical projection $\pi_{\R^2}: \SE2 \to \R^2 \simeq \SE2 / \SO2$. 
\begin{lemma}\label{lem:curv}
    Let $\pi_{\R^2}:\SE2 \to \R^2 $ be the canonical projection, which in coordinates is given by $\pi_{\R^2}(\theta,x,y):= (x,y)$. Let $\gamma(t)$ be a geodesic in $\SE2$, then the curvature of $\pi_{\R^2}(\gamma(t))$ is given by $\dot{\theta}$. 
\end{lemma}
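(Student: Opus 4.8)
The plan is to read the base dynamics directly off equation \eqref{eq:dif-eq-base} and then feed them into the standard formula for the signed curvature of a plane curve. Writing $\gamma(t)=(\theta(t),x(t),y(t))$ and projecting \eqref{eq:dif-eq-base} onto the coordinate fields in \eqref{eq:left-inv}, I get $\dot\theta = P_\theta$, $\dot x = P_u\cos\theta$, and $\dot y = P_u\sin\theta$. Hence the projected track $c(t)=\pi_{\R^2}(\gamma(t))=(x(t),y(t))$ has velocity $\dot c = P_u(\cos\theta,\sin\theta)$, so its Euclidean speed is $|P_u|$ and its oriented unit tangent points along the angle $\theta$ (up to the sign of $P_u$). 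This last observation is the conceptual heart of the matter: the heading $\theta$ of the frame is exactly the tangent direction of the projected curve.

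From here I would compute $\ddot x$ and $\ddot y$, using $\dot\theta=P_\theta$ together with the fiber equation $\dot P_u=-P_\theta P_v$ from \eqref{eq:dif-eq-fiber}. Substituting into $\kappa = (\dot x\ddot y-\dot y\ddot x)/(\dot x^2+\dot y^2)^{3/2}$, the terms carrying $\dot P_u$ cancel against one another by the Pythagorean identity, the numerator collapses to $P_u^2 P_\theta$, and the denominator is $|P_u|^3$, giving $\kappa = P_\theta/|P_u|$. The same answer follows more cheaply without differentiating twice: signed curvature is the rate of rotation of the unit tangent per unit arc length, and since that tangent has angle $\theta$, one has $\kappa = \dot\theta/(ds/dt) = P_\theta/|P_u|$ with $ds=|P_u|\,dt$ the Euclidean arc-length element of $c$.

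The only genuine point to pin down, and the step I expect to be the real obstacle, is the parametrization normalization that turns this into the clean statement $\kappa=P_\theta$. On the energy level $H_{sR}=\tfrac12$ the geodesic $\gamma$ is parametrized by \sR arc length, so $P_\theta^2+P_u^2=1$, whereas $c$ runs at the Euclidean speed $|P_u|$; reading the curvature against $c$'s own $\R^2$ arc length (equivalently taking $|P_u|=1$) yields $\kappa=P_\theta$ exactly. I would therefore spend the care on bookkeeping the two arc-length parameters and on the isolated instants where $P_u=0$, which are the cusps of the projected track: there the oriented tangent angle jumps by $\pi$ and the curvature formula must be interpreted as a one-sided limit. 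Apart from that normalization and the cusp analysis, the whole lemma is a direct substitution from \eqref{eq:dif-eq-base} and \eqref{eq:dif-eq-fiber}.
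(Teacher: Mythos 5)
Your computation is correct, and it follows the same route as the paper: project via \eqref{eq:dif-eq-base}, observe that the tangent line of $c(t)=\pi_{\R^2}(\gamma(t))$ makes angle $\theta(t)$ with the $x$-axis, and read the curvature off the rotation of that angle. The paper's proof is exactly your ``cheaper'' second argument compressed to two lines: $\dot{c}=P_u(\cos\theta,\sin\theta)$, so the unit tangent is $(\cos\theta,\sin\theta)$, so the curvature is $\dot{\theta}=P_\theta$; the paper never differentiates twice and never invokes \eqref{eq:dif-eq-fiber}.

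The place where you are more careful than the paper is also the place where your write-up wobbles. You correctly observe that the parametrization-independent Euclidean curvature of $c$ is $P_\theta/|P_u|$, since $c$ moves at speed $|P_u|$ rather than $1$; the clean identity $\kappa=P_\theta$ is the turning rate of the tangent per unit of the parameter $t$, i.e.\ per unit sub-Riemannian arc length of $\gamma$, which is what the paper implicitly uses. But your proposed reconciliation --- ``reading the curvature against $c$'s own $\R^2$ arc length (equivalently taking $|P_u|=1$)'' --- does not work: reading against $c$'s own arc length is precisely what produces the factor $1/|P_u|$, and on the level set $H_{sR}=\tfrac{1}{2}$ one has $P_\theta^2+P_u^2=1$, so $|P_u|=1$ only at the isolated points where $P_\theta=0$, where the two formulas already agree trivially. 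Indeed, at a cusp ($P_u=0$) the true Euclidean curvature blows up while $P_\theta=\pm 1$, so no choice of normalization makes the literal equality hold there. The two consistent readings are: (i) the lemma's ``curvature'' means $d\theta/dt$, which your identity $\dot{\theta}=P_\theta$ proves immediately and which is the paper's implicit convention; or (ii) curvature means $P_\theta/|P_u|$, which differs from $P_\theta$ by a positive factor away from cusps and hence has the same sign and the same zeros --- and that sign/zero information is all the paper extracts from Lemma \ref{lem:curv} in the inflection/non-inflection dichotomy of sub-sub-Section \ref{subsubsec:clas-geo}. Either way your core computation stands; just replace the ``taking $|P_u|=1$'' sentence with one of these two statements.
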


\begin{proof}
    Let $\gamma(t)$ be a \sR geodesic in $\SE2$. The Background Theorem \ref{the:back-grou} implies that if $c(t) := \pi_{\R^2}(\gamma(t))$, then  $\dot{c}(t) = P_u(\cos(\theta(t),\sin(\theta(t))$. So the unitary tangent vector is $(\cos(\theta(t),\sin(\theta(t))$, and by differential geometry $\dot{\theta}$ is the curvature of $c(t)$. 
\end{proof}

\subsubsection{Symmetries of the reduced Hamiltonian flow}

Let us describe the curve $\alpha_{\mu}$ to understand the symmetries of the geodesic flow. The following proposition classifies the level set $H_{\mu}^{-1} = \alpha_{\mu}$.
\begin{lemma}\label{lem:alp-clas}
For different values of $R$, the level set $\alpha_{\mu}$  is described as follows: 
\begin{itemize}
    \item If $R> 1,\alpha_{\mu}$ consists precisely of two contractible, simple, and closed smooth curves. 
    
    The first curve is given by
    $$ \alpha_{\mu}^{1} = \{ (p_{\theta},\theta) = (\pm \sqrt{1-R^2\cos^2(\theta-\delta)}, \theta)  \text{ } | \text{ }  \theta \in  I_{\mu}^1 \}, $$
    here $I_{\mu}^1$ is the Hill interval $[\theta_{min}^1,\theta_{max}^1]$, where $\theta_{min}^1$  and $\theta_{max}^1$ are given by the solutions to the equation $\theta = \arccos(\frac{1}{R}) + \delta$ using the principal branch of $\arccos$. 

    The second curve is given by
        $$ \alpha_{\mu}^{2} = \{ (p_{\theta},\theta) = (\pm \sqrt{1-R^2\cos^2(\theta-\delta)}, \theta)  \text{ } | \text{ } \theta \in I_{\mu}^2   \}, $$
    here $I_{\mu}^2$ is the Hill interval $[\theta_{min}^2,\theta_{max}^2]$, where $\theta_{min}^2$ and $\theta_{max}^2$ are given by the solution to the equation $\theta = \arccos(\frac{1}{R}) + \delta + \pi$ using the principal branch. 
    \item If $R=1$, then $\alpha_{\mu}$ consists of precisely one non-contractible, non-simple, and closed curve. 
    \item If $0 < R <1$, then $\alpha_{\mu}$ consists of precisely two non-contractible, simple, and closed smooth curves given by
    $$ \alpha_{\mu}^{\pm} = \{ (p_{\theta},\theta) = (\pm \sqrt{1-R^2\cos^2(\theta-\delta)}, \theta)  \text{ } | \text{ } \theta \in \mathbf{S}^1  \} $$
\end{itemize}
\end{lemma}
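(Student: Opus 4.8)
The plan is to treat $\alpha_\mu$ as a level set of the smooth function $H_\mu(p_\theta,\theta) = \tfrac12(p_\theta^2 + R^2\cos^2(\theta-\delta))$ on the cylinder $\R \times \mathbf{S}^1$, and to read off its topology from the critical points of $H_\mu$ together with the explicit solution $p_\theta = \pm\sqrt{1-R^2\cos^2(\theta-\delta)}$. First I would locate the critical points: $\nabla H_\mu = (p_\theta,\, -R^2\cos(\theta-\delta)\sin(\theta-\delta))$ vanishes exactly when $p_\theta = 0$ and $\theta - \delta \in \{0,\tfrac{\pi}{2},\pi,\tfrac{3\pi}{2}\}$. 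A Hessian computation shows the points with $\theta-\delta \in \{\tfrac{\pi}{2},\tfrac{3\pi}{2}\}$ are nondegenerate minima at level $H_\mu=0$, while those with $\theta-\delta\in\{0,\pi\}$ are saddles at level $H_\mu = \tfrac12 R^2$. Hence $\tfrac12$ is a regular value of $H_\mu$ precisely when $R \neq 1$, and the single exceptional value $R=1$ is exactly where the energy level $\tfrac12$ passes through the two saddles. This one observation organizes the entire trichotomy: below the saddle value the level set encircles the two minima, above it the level set is pushed off to two curves wrapping the cylinder, and at the saddle value it degenerates.

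With this in hand I would dispatch the three cases by combining regularity with the explicit formula. For $0<R<1$ the radicand $1-R^2\cos^2(\theta-\delta) \geq 1-R^2 > 0$ for every $\theta$, so the Hill region is all of $\mathbf{S}^1$ and $p_\theta$ never vanishes; the level set is the disjoint union of the two graphs $p_\theta = \pm\sqrt{1-R^2\cos^2(\theta-\delta)}$, each a smooth function over the full circle, hence simple, winding once around the $\theta$-factor and therefore non-contractible. For $R>1$ the condition $|\cos(\theta-\delta)|\leq 1/R$ confines $\theta$ to the two disjoint Hill intervals $I_\mu^1,I_\mu^2$, whose endpoints are the four solutions of $R^2\cos^2(\theta-\delta)=1$, namely $\theta-\delta = \pm\arccos(1/R)$ and $\pi\pm\arccos(1/R)$. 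Over each arc the two branches $p_\theta = \pm\sqrt{\,1-R^2\cos^2(\theta-\delta)\,}$ join at the endpoints, where $p_\theta = 0$, producing a closed loop contained in the strip $\{\theta \in I_\mu^j\}$; such a loop is contractible in the cylinder and simple, giving two contractible curves. For $R=1$ one has $p_\theta^2 = \sin^2(\theta-\delta)$, so $\alpha_\mu$ is the union of the two sine graphs $p_\theta = \pm\sin(\theta-\delta)$; these wrap around the cylinder and meet transversally at the two saddle points $(0,\delta)$ and $(0,\delta+\pi)$, giving a single connected, non-contractible, non-simple closed curve.

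The step I expect to require the most care is the smoothness at the turning points in the case $R>1$. There $p_\theta = 0$ while the curve has a vertical tangent as a graph over $\theta$, so the naive parametrization $\theta\mapsto p_\theta(\theta)$ breaks down at the endpoints of $I_\mu^j$. I would instead invoke the implicit function theorem on $F(p_\theta,\theta)=p_\theta^2+R^2\cos^2(\theta-\delta)-1$: at an endpoint $\theta_0$ one has $\cos(\theta_0-\delta) = \pm 1/R$ with $1/R<1$, so $\sin(\theta_0-\delta)\neq 0$ and the partial $\partial_\theta F = -R^2\sin(2(\theta-\delta))$ is nonzero; thus $\alpha_\mu$ is locally a smooth graph $\theta = \theta(p_\theta)$ through the turning point, which glues the two branches into a single smooth loop. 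The remaining bookkeeping — verifying simplicity, counting the loops and arcs, and confirming the winding that distinguishes contractible from non-contractible curves — is then routine once the regular-value structure and the turning-point analysis are in place.
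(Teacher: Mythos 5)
Your proposal is correct and takes essentially the same route as the paper: determine where $dH_\mu$ vanishes to conclude that $\tfrac12$ is a regular value exactly when $R \neq 1$, then read off the trichotomy from the explicit branches $p_\theta = \pm\sqrt{1-R^2\cos^2(\theta-\delta)}$, with the roots coinciding at the Hill-interval endpoints when $R>1$, never coinciding when $0<R<1$, and crossing at the saddles when $R=1$. Your Hessian classification of the critical points and the implicit-function-theorem argument at the turning points supply details the paper's proof leaves implicit, but they refine rather than change the argument.
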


\begin{proof}
Let us discuss first the smoothness of the level set $\alpha_{\mu}$. The regular value theorem says that $\alpha_{\mu}$ is smooth if 
$$dH_{\mu}\big|_{\alpha_{\mu}} = (p_{\theta}, -R^2\cos(\theta - \delta)\sin (\theta - \delta))  \neq 0. $$
If $p_{\theta} \neq 0$ then $dH_{\mu}\big|_{\alpha_{\mu}} \neq 0$. Thus, focusing on the case $p_{\theta} = 0$ is sufficient. The condition $p_{\theta} = 0$ implies $R^2\cos^2(\theta - \delta) = 1$, thus $dH_{\mu}\big|_{\alpha_{\mu}} = 0$ if and only if $(p_{\theta},\theta) = (0, \delta)$ or $(p_{\theta},\theta) = (0, \pi + \delta)$. The conditions $\theta = \delta$ or $\theta = \pi + \delta$ imply $R = 1$. Therefore, $\alpha_{\mu}$ is a smooth curve if $R \neq 1$.

If $R > 1$, then the level set $\alpha_{\mu}$ is well defined when $0 \leq 1 -R^2 \cos^2( \theta - \delta)$. In that case, $\theta$ has two disjoint intervals where this inequality holds: the Hill intervals $I_{\mu}^1$ and $I_{\mu}^2$. Let us parameterize the curves by $\theta$, then when $p_{\theta} = 0$, the positive and negative touch, making the level set consists of two simple closed and contractible curves. 

If $R = 1$, the level set $\alpha_{\mu}$ is well defined for all $\theta$. In addition, we can parameterize the level set by the expression $p_{\theta}= \pm |\sin(\theta-\delta)|$.  If $p_{\theta} = 0$, the parametrizations with a positive and negative sign coincide, making the level set a non-simple, noncontractible, and closed curve. 

If $R>1$, the level set $\alpha_{\mu}$ is well defined for all $\theta$. If we parameterize the curves by $\theta$, then $p_{\theta} \neq 0$. Thus, the curves  $\alpha_{\mu}^{+}$ and $\alpha_{\mu}^{-}$ do not touch, making the level set consists of two non-contractible, simple, and closed curves.
 \end{proof}

\begin{Remark}\label{rem:alp-ext} We extend the definition of the curves $\alpha_{\mu}^1$ and $\alpha_{\mu}^2$  to the case $R=1$ where $I_{\mu}^1 = [\delta,\pi+\delta]$, and $I_{\mu}^2 = [\pi+\delta,2\pi+\delta]$ are the domains of the curves, respectively. Similarly, we extend the definition of the curves $\alpha_{\mu}^+$ and $\alpha_{\mu}^-$ to the case $R=1$. \end{Remark}

See Figure \ref{fig:three graphs} for a better understanding of  Lemma \ref{lem:alp-clas} and Remark \ref{rem:alp-ext}.

The following lemma describes the symmetries of the reduced Hamiltonian flow, which helps us study the symmetries of the geodesic flow.
\begin{lemma}\label{lem:sym-red-ham}
The reduced Hamiltonian has the following symmetries:
\begin{itemize}
    \item If $R \geq 1$ and $(p_{\theta}(t),\theta(t))$ is a solution laying in $\alpha_{\mu}^1$, then $(p_{\theta}(t),\theta(t) + \pi)$ is a solution laying in $\alpha_{\mu}^2$. 
    \item If $R \leq 1$ and $(p_{\theta}(t),\theta(t))$ is a solution laying in $\alpha_{\mu}^+$, then $(-p_{\theta}(-t),\theta(-t))$ is a solution laying in $\alpha_{\mu}^{-}$.
\end{itemize}

\end{lemma}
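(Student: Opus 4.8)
The plan is to verify each of the two claimed symmetries directly against the reduced Hamiltonian equations \eqref{eq:red-ham-eq}, exploiting the $\pi$-periodicity and the parity of the function $\cos^2(\theta-\delta)$ that appears in the reduced Hamiltonian $H_\mu$. The key observation is that $H_\mu$ depends on $\theta$ only through $\cos^2(\theta-\delta)$, which is invariant under $\theta \mapsto \theta + \pi$ and even under $\theta \mapsto -\theta$ (after centering at $\delta$); these two invariances are exactly what drive the two bullet points, and the curve $\alpha_\mu$ defined by \eqref{eq:alp-cur} inherits the same invariances, so the claimed image curves are correctly identified.

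For the first symmetry, I would assume $(p_\theta(t),\theta(t))$ solves \eqref{eq:red-ham-eq} and lies in $\alpha_\mu^1$, and set $\tilde p_\theta(t) = p_\theta(t)$, $\tilde\theta(t) = \theta(t) + \pi$. I would then check that the pair $(\tilde p_\theta,\tilde\theta)$ again satisfies \eqref{eq:red-ham-eq}: since $\cos(\tilde\theta-\delta) = -\cos(\theta-\delta)$ and $\sin(\tilde\theta-\delta)=-\sin(\theta-\delta)$, the product $\cos(\tilde\theta-\delta)\sin(\tilde\theta-\delta)$ is unchanged, so $\dot{\tilde p}_\theta = R^2\cos(\tilde\theta-\delta)\sin(\tilde\theta-\delta)=\dot p_\theta$ holds, and $\dot{\tilde\theta}=\dot\theta=p_\theta=\tilde p_\theta$ holds trivially. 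To finish I would note that the translate lands on $\alpha_\mu^2$ rather than $\alpha_\mu^1$: the defining equation $1 = p_\theta^2 + R^2\cos^2(\theta-\delta)$ is preserved, and under $\theta\mapsto\theta+\pi$ the Hill interval $I_\mu^1$ maps to $I_\mu^2$ (their endpoints differ precisely by the $\pi$-shift recorded in Lemma \ref{lem:alp-clas} and Remark \ref{rem:alp-ext}), so the shifted solution lies in the second connected component. This handles both $R>1$ and, via the extension in Remark \ref{rem:alp-ext}, the boundary case $R=1$.

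For the second symmetry, I would set $\bar p_\theta(t) = -p_\theta(-t)$ and $\bar\theta(t) = \theta(-t)$ and again substitute into \eqref{eq:red-ham-eq}. The time reversal contributes a sign from the chain rule, $\frac{d}{dt}\theta(-t) = -\dot\theta(-t)$, which combines with the explicit sign on $p_\theta$ so that $\dot{\bar\theta}(t) = -\dot\theta(-t) = -p_\theta(-t) = \bar p_\theta(t)$, giving the second Hamilton equation. For the first equation, $\dot{\bar p}_\theta(t) = \frac{d}{dt}\bigl(-p_\theta(-t)\bigr) = \dot p_\theta(-t) = R^2\cos(\theta(-t)-\delta)\sin(\theta(-t)-\delta) = R^2\cos(\bar\theta(t)-\delta)\sin(\bar\theta(t)-\delta)$, as required. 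Finally I would observe that the defining relation of $\alpha_\mu$ is even in $p_\theta$, so flipping the sign of $p_\theta$ exchanges the two sheets $\alpha_\mu^+$ and $\alpha_\mu^-$ (distinguished exactly by the sign of $p_\theta$), confirming that the time-reversed solution lies on the opposite component.

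I do not expect a serious analytic obstacle here, since both verifications are short substitutions into \eqref{eq:red-ham-eq} driven by elementary trigonometric identities. The one point demanding genuine care is the bookkeeping of \emph{which} component each transformed solution lands in: the differential equation is blind to the distinction between $\alpha_\mu^1$ and $\alpha_\mu^2$ (and between $\alpha_\mu^+$ and $\alpha_\mu^-$), so the content of the lemma is really the correct matching of connected components of the level set. I would therefore treat the ODE check as routine and devote the main effort to pinning down the component identification using the explicit Hill-interval descriptions in Lemma \ref{lem:alp-clas} and the sign convention $p_\theta = \pm\sqrt{1-R^2\cos^2(\theta-\delta)}$ that labels the $\pm$ sheets.
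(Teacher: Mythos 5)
Your proposal is correct and follows essentially the same route as the paper: direct substitution of the transformed pair into the reduced Hamilton equations \eqref{eq:red-ham-eq}, using the invariance of $\cos(\theta-\delta)\sin(\theta-\delta)$ under the $\pi$-shift and time reversibility for the second bullet, together with the observation that the transformations swap the components $\alpha_\mu^1 \leftrightarrow \alpha_\mu^2$ and $\alpha_\mu^+ \leftrightarrow \alpha_\mu^-$. The only difference is one of emphasis: you spell out the chain-rule verification of time reversal and the Hill-interval bookkeeping that the paper leaves implicit, which is a harmless refinement rather than a different argument.
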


\begin{proof}
   If $R \geq 1$, we notice that if $(p_\theta,\theta)$ is a point in $\alpha_{\mu}^1$, then $(p_\theta,\theta + \pi)$ is a point in $\alpha_{\mu}^2$. Moreover, if   $(p_\theta(t),\theta(t))$ is a solution, then   $(\tilde{p}_{\theta},\tilde{\theta}) = (p_\theta(t),\theta(t) + \pi)$ is also a solution. Indeed, $(\tilde{p}_{\theta}(t),\tilde{\theta}(t))$  satisfies the differential equations
   \begin{equation*}
   \begin{split}
     \dot{\tilde{\theta}}     & = \dot{\theta} = p_{\theta} = \tilde{p}_{\theta}, \\
     \dot{\tilde{p}}_{\theta} & = \dot{p}_{\theta} = R^2 \cos(\theta - \delta)\sin(\theta-\delta) \\
        &= R^2 \cos(\tilde{\theta}-\delta - \pi)\sin(\tilde{\theta}-\delta - \pi) = R^2 \cos(\tilde{\theta}-\delta )\sin(\tilde{\theta}-\delta).
   \end{split}
   \end{equation*}
   
If $R \leq 1$, we notice that if $(p_\theta,\theta)$ is a point in $\alpha_{\mu}^+$, then $(-p_\theta,\theta)$ is a point in $\alpha_{\mu}^{-}$. Moreover, if   $(p_\theta(t),\theta(t))$ is a solution, then   $(\tilde{p}_{\theta},\tilde{\theta}) = (-p_\theta(-t),\theta(-t))$ is also a solution. Indeed, $(\tilde{p}_{\theta}(t),\tilde{\theta}(t))$  satisfies the differential equations
   \begin{equation*}
   \begin{split}
     \dot{\tilde{\theta}}(t)     & = -\dot{\theta}(-t) = -p_{\theta}(-t) = \tilde{p}_{\theta}(t), \\
     \dot{\tilde{p}}_{\theta}(t) & = \dot{p}_{\theta}(-t) = R^2 \cos(\theta(-t) - \delta)\sin(\theta(-t)-\delta) = R^2 \cos(\tilde{\theta}(t) - \delta)\sin(\tilde{\theta}(t) -\delta).
   \end{split}
   \end{equation*}
We call such a property time reversibility.
\end{proof}

\begin{figure}
     \centering
     \begin{subfigure}[b]{0.3\textwidth}
         \centering
         \includegraphics[width=1\textwidth]{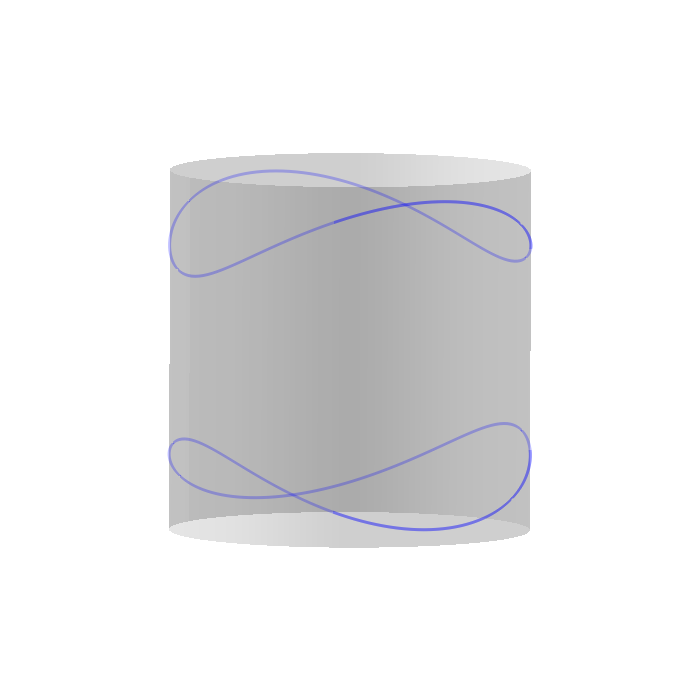}
         \caption{$R>1$}
         \label{fig:Rless1}
     \end{subfigure}
     \hfill
     \begin{subfigure}[b]{0.3\textwidth}
         \centering
         \includegraphics[width=1\textwidth]{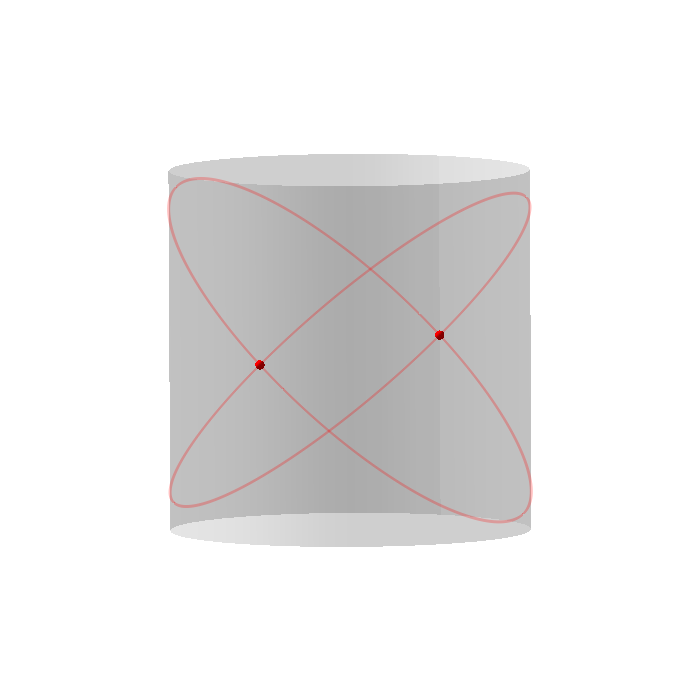}
         \caption{$R=1$}
         \label{fig:Requal1}
     \end{subfigure}
     \hfill
     \begin{subfigure}[b]{0.3\textwidth}
         \centering
         \includegraphics[width=1\textwidth]{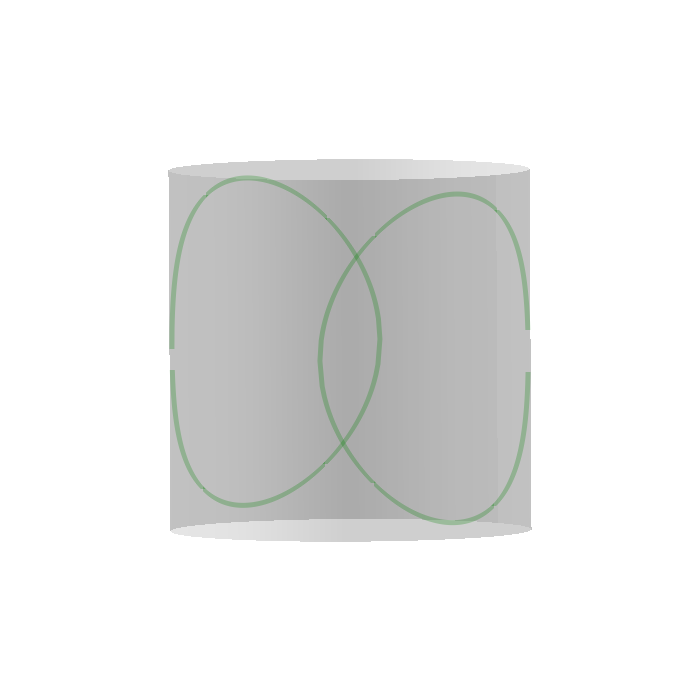}
         \caption{$R<1$}
         \label{fig:Rbigger1}
     \end{subfigure}
        \caption{The panels show the curve $\alpha_{\mu}$ for the three different cases}
        \label{fig:three graphs}
\end{figure}

\subsubsection{Classification of \sR geodesics}\label{subsubsec:clas-geo}

In this sub-sub-Section, we classify the \sR geodesics according to their reduced dynamics. If $\gamma(t)$ is a geodesic parameterized by arc-length in $\SE2$, then $\gamma(t)$ is one of the following three types:

(\textbf{Line}) We say a geodesic $\gamma(t)$ is of the type line if  $\dot{\theta} = 0$. A geodesic is of the type line if its reduced dynamic is trivial, i.e., if $R= 1$ and $\theta = \delta$ or $\theta = \delta + \pi$. 

(\textbf{Heteroclinic}) We say a geodesic $\gamma(t)$ is of the type heteroclinic if its reduced dynamic is heteroclinic. The reduced dynamic is heteroclinic if and only if $R=1$ and $\dot{\theta} \neq 0$. 

(\textbf{$\theta$-periodic}) We say a geodesic $\gamma(t)$ is of the type $\theta$-periodic if its reduced dynamic is periodic. The reduced dynamic is periodic if and only if $R \neq 1$. \smallskip

Lemma \ref{lem:curv} states that a geodesic projected to the plane $\R^2$ has curvature $P_{\theta} = p_{\theta}$, then following the terminology from \cite{Moiseev2010}, we also distinguish whether the projection of the geodesics has inflection points.

(\textbf{Inflection}) We say a geodesic $\gamma(t)$ is of the type inflection  if the projection $\pi_{\R^2}(\gamma(t))$ has inflection points. $\gamma(t)$  has inflection points if and only if $R > 1$.

(\textbf{Non-Inflection}) We say a geodesic $\gamma(t)$ is of the type non-inflection if the projection $\pi_{\R^2}(\gamma(t))$ does not have inflection points. $\gamma(t)$ is non-inflection if and only if $R \leq 1$. \smallskip

\begin{figure}
     \centering
     \begin{subfigure}[b]{0.3\textwidth}
         \centering
         \includegraphics[width=1\textwidth]{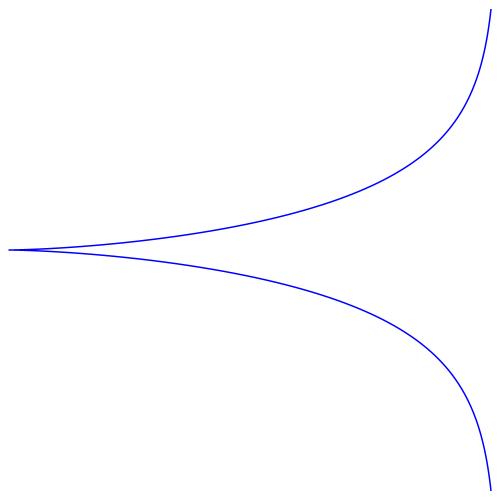}
         \caption{Heteroclinic}
     \end{subfigure}
     \hfill
     \begin{subfigure}[b]{0.3\textwidth}
         \centering
         \includegraphics[width=1\textwidth]{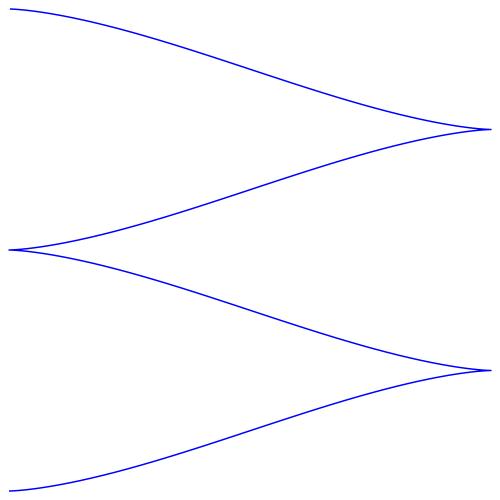}
         \caption{Inflection}
     \end{subfigure}
     \hfill
     \begin{subfigure}[b]{0.3\textwidth}
         \centering
         \includegraphics[width=1\textwidth]{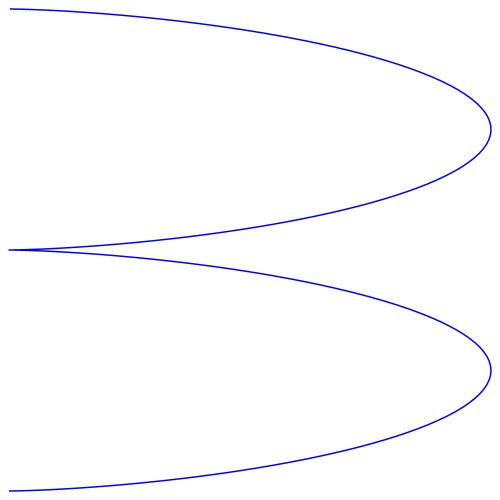}
         \caption{Non-Infelction}
     \end{subfigure}
        \caption{The panels show the projection to the plane $\R^2$ for each type of geodesic}
        \label{fig:geo}
\end{figure}

 The following proposition gives explicit formulas for the $\theta$-period and the change performed by the $x$ and $y$ coordinates during the period. 

\begin{Prop}\label{prp:period-map}
Let $\gamma(t)$ be a \sR geodesic of the type $\theta$-periodic with momentum $\mu$ and Hill interval $I_{\mu}$. Then, the $\theta$-period is given by 
\begin{equation*}
L(\mu) := \int_{I_{\mu}} \frac{d\theta}{\sqrt{1-R^2\cos^2(\theta-\delta)}}  .  
\end{equation*}
The changes $\Delta x(\mu)$ and $\Delta y(\mu)$ performed by the coordinates $x$ and $y$ after the geodesic travels a period $L(\mu)$ are given by
 \begin{equation*}
\Delta x(\mu) := \int_{I_{\mu}} \frac{ R\cos(\theta-\delta) \cos(\theta) d\theta}{\sqrt{1-R^2\cos^2(\theta-\delta)}}\;\; \text{and} \; \; \Delta y(\mu) := \int_{I_{\mu}} \frac{ R\cos(\theta-\delta) \sin(\theta) d\theta}{\sqrt{1-R^2\cos^2(\theta-\delta)}}.  
 \end{equation*}
 In addition, the changes $\Delta x(\mu)$ and $\Delta y(\mu)$ are independent of the initial point.
\end{Prop}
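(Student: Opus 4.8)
The plan is to reduce all three time-integrals to integrals in the single variable $\theta$ by using the reduced flow itself as the change of variables $t \mapsto \theta$. The point that makes this work is that, along the geodesic, $\dot{x}$ and $\dot{y}$ are functions of $\theta$ alone.

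First I would record the base equations of motion. By the Background Theorem \ref{the:back-grou}, along $\gamma$ one has $\dot{\gamma} = \dot{\theta}\,X_{\theta} + R\cos(\theta-\delta)\,X_u$ from \eqref{eq:hor-lift}; since $X_u = \cos\theta\,\tfrac{\partial}{\partial x} + \sin\theta\,\tfrac{\partial}{\partial y}$, reading off the components gives
\[ \dot{\theta} = p_{\theta}, \qquad \dot{x} = R\cos(\theta-\delta)\cos\theta, \qquad \dot{y} = R\cos(\theta-\delta)\sin\theta. \]
On the arc-length level set $H_{\mu} = \tfrac{1}{2}$ the reduced orbit lies on $\alpha_{\mu}$, so $p_{\theta} = \pm\sqrt{1 - R^2\cos^2(\theta-\delta)}$ by \eqref{eqn: reduced dynamics}. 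Thus $\dot{x}$ and $\dot{y}$ depend only on $\theta$ and on the fixed data $\mu = (R,\delta)$.

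Next I would change variables from $t$ to $\theta$. On any arc where $p_{\theta} \neq 0$, $\theta$ is monotone in $t$ and $dt = d\theta/\dot{\theta} = d\theta/p_{\theta}$. Taking the $\theta$-period $L(\mu)$ to be the time for $\theta$ to traverse the Hill interval $I_{\mu}$ once and substituting $p_{\theta} = \sqrt{1 - R^2\cos^2(\theta-\delta)}$ gives the claimed formula for $L(\mu)$ immediately. Feeding the same substitution into $\Delta x(\mu) = \int \dot{x}\,dt$ and $\Delta y(\mu) = \int \dot{y}\,dt$, the factor $p_{\theta}$ coming from $dt$ cancels the square root in the denominator, and the numerators $R\cos(\theta-\delta)\cos\theta$ and $R\cos(\theta-\delta)\sin\theta$ are exactly $\dot{x}$ and $\dot{y}$ expressed in $\theta$; this reproduces the two integrals in the statement.

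The step that needs care --- and the one I expect to be the main obstacle --- is the behavior at the endpoints of $I_{\mu}$ and the sign of $p_{\theta}$, which is precisely where the topology classified in Lemma \ref{lem:alp-clas} enters. When $0 < R < 1$ the orbit $\alpha_{\mu}^{\pm}$ has $p_{\theta}$ of constant sign and $\theta$ sweeps all of $\mathbf{S}^1$, so a single monotone pass gives the formulas with no further comment. When $R > 1$ the orbit $\alpha_{\mu}^1$ (or $\alpha_{\mu}^2$) is an oval on which $p_{\theta}$ vanishes at the turning points $\theta_{min}, \theta_{max}$ and changes sign; there the integrand behaves like $|\theta - \theta_{max}|^{-1/2}$ near an endpoint (the zero of $1 - R^2\cos^2(\theta-\delta)$ is simple, since $\cos(\theta-\delta) = \pm 1/R$ forces $\sin(\theta-\delta)\neq 0$), so the integral converges, and I would split the traversal at the turning points, using that the two branches $p_{\theta} = \pm\sqrt{\,\cdot\,}$ contribute identically because $\dot{x}, \dot{y}$ depend only on $\theta$ while $|p_{\theta}|$ is unchanged under $p_{\theta}\mapsto -p_{\theta}$. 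Finally, independence from the initial point follows from the same structural fact: a complete traversal covers $I_{\mu}$ exactly, and the integrands depend only on $\theta$, so $L(\mu)$, $\Delta x(\mu)$ and $\Delta y(\mu)$ depend only on $\mu$ through $R$, $\delta$ and $I_{\mu}$ and not on the phase at which the traversal starts; independence from $(x_0,y_0)$ is the left $\R^2$-translation invariance of the construction.
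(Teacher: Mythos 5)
Your proposal is correct and takes essentially the same route as the paper's own proof: on the level set $H_{\mu}=\tfrac{1}{2}$ one has $\dot{\theta}=\pm\sqrt{1-R^2\cos^2(\theta-\delta)}$, and the three time integrals are reduced to quadratures in $\theta$ via $dt = d\theta/\dot{\theta}$, with $\dot{x}$ and $\dot{y}$ read off from \eqref{eq:hor-lift}. If anything, you are more explicit than the paper, which handles the sign of the root only through an orientation convention (citing Landau), whereas you additionally verify the integrable square-root singularity at the turning points and the equal contributions of the two branches $p_{\theta}=\pm\sqrt{\cdot}$ for $R>1$.
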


\begin{proof} 
Being the reduced system $H_{\mu}$ one degree of freedom, we reduce it to quadrature in the following way: Using the energy level $H_{\mu} = \frac{1}{2}$ and the second reduced Hamilton equation from \eqref{eq:red-ham-eq}, we have 
\begin{equation}\label{eq:theta-dot-sol}
    \dot{\theta} = \pm \sqrt{1-R^2\cos^2(\theta-\delta)}.
\end{equation}
We solve the differential equation using the separation of variables method. To compute $\Delta x(\mu)$ and $\Delta y(\mu)$, we integrate the coordinates $x$ and $y$ in the same way using equation \eqref{eq:hor-lift}. 

We remark that there is no ambiguity regarding the sign of $\dot{\theta}$. If $\dot{\theta}$ is positive in the interval $(t-\epsilon,t+\epsilon)$ for some $\epsilon>0$, then the interval of integration $[\theta(t-\epsilon),\theta(t+\epsilon)]$ is positively oriented. Conversely, if $\dot{\theta}$ is negative in the interval $(t-\epsilon,t+\epsilon)$ for some $\epsilon>0$, then the interval o integration $[\theta(t-\epsilon),\theta(t+\epsilon)]$ is negatively oriented. Therefore, if $\dot{\theta}$ is negative, we utilize the positive root and integrate on the positively oriented interval $[\theta(t+\epsilon),\theta(t-\epsilon)]$. We make the convention to choose the positive root and integrate it using positively oriented intervals. For more details about this integration, refer \cite[Section 11]{Landau} for a general mechanical system. 
\end{proof}

\subsection{Proof of Theorem \ref{main:the-periodic}}\label{subsec:proof-periodic}

\begin{proof}
     When $R=0$. We have that $x(t) = x_0 $ and $ y(t) = y_0$ are constant for all $t$ in $\R$ and $\theta(t)$ is periodic, so the geodesic is periodic.

     Let us assume that $\gamma(t)$ is a periodic \sR geodesic, then Proposition \ref{prp:period-map} implies that a geodesic $\gamma(t)$ of the type $\theta$-periodic is periodic if and only if $\Delta x(\mu) =0$ and  $\Delta y(\mu) = 0$. Therefore, Proposition \ref{prp:period-map} and the cosine addition formula implies
 \begin{equation*}
     \begin{split}
        0 = a \Delta x(\mu) + b \Delta y(\mu) &= R^2 \int_{\beta_{\mu}} \frac{ \cos^2(\theta-\delta)  d\theta}{\sqrt{1-R^2\cos^2(\theta-\delta)}}.
     \end{split}
 \end{equation*}
Where we used again that the relation $(a,b) = R(\cos(\delta),\sin(\delta))$. Since the integral from the above equation is positive, we conclude $R = 0$.
\end{proof}

\section{Metric lines}\label{sec:metri-line}
\subsection{Hamilton-Jacobi Equation and Calibration Functions} \label{sec:cal-fuc}

We will utilize Hamilton-Jacobi theory to build a calibration function. Let us first introduce the proper definitions: Given a Riemannian manifold $M$ and a Hamiltonian function $H: T^*M \to \R$, the time-independent Hamilton-Jacobi equation is a partial differential equation in  $S: M \to \R$ given by
\begin{equation}
    H(dS,q)  = const, \label{eq: Hamilton Jacobi}
\end{equation} 
where $dS$ is the differential of $S(q)$.

When the Hamiltonian $H$ is purely kinetic, consult \cite[Chapter 2]{Arnold} for the formal definition of a kinetic energy, the Hamilton-Jacobi equation is also known as the \textbf{eikonal equation}, and we can rewrite equation \eqref{eq: Hamilton Jacobi} as $||\nabla S|| = 1$. If $S(q)$ is a solution to the eikonal equation, then $S(q)$ is the oriented distance from the point $q$ to a given sub-manifold; consult \cite{on-riem-man-grad} for more details about this property in the \Ri case.

When considering a \sR manifold with a \sR kinetic energy, the corresponding operator is the horizontal gradient, denoted by $\nabla_{hor} S$. Then the \sR eikonal equation is given by $||\nabla_{hor} S|| = 1$.  Let us introduce the definition of the horizontal gradient. 
\begin{defi}\label{def:hor-nab}
Let $M$ be a \sR manifold with distribution $\D$ and \sR inner product $\langle \cdot,\cdot \rangle$. Let $S:M\rightarrow \mathbb{R}$ be a $C^k(M)$ function with $k\geq 1$. We say that a vector field $\nabla_{hor}S$ is the horizontal gradient of $S(q)$, if $\nabla_{hor}S$ is the unique horizontal vector field satisfying
   \begin{equation*}
       \langle\nabla_{hor}S,v\rangle_q=dS_q(v),\; 
   \end{equation*}
    for all $v$ in $\D_q$ and  $q$ in $M$.
    \end{defi}

If $H_{sR}$ is the \sR kinetic energy given by equation \eqref{eq:ham-function}, then the \sR eikonal equation is the partial differential equation given by


\begin{equation}\label{eq:eikonal}
    1 = (\frac{\partial S}{\partial \theta})^2 + (\cos\theta\frac{\partial S}{\partial x} + \sin\theta\frac{\partial S}{\partial y})^2.
\end{equation}

Let us solve the \sR eikonal equation by reducing it to an ordinary differential equation. Consider the ansatz
\begin{equation}
    S_{\mu}(\theta,x,y) = f(\theta) + R\cos(\delta) x + R \sin (\delta) y. 
\end{equation}
Substituting into equation \eqref{eq:eikonal} we find that
\begin{align*}
    1 &= (f'(\theta))^2+ R^2\cos^2(\theta -\delta). 
\end{align*}
Therefore $f(\theta)$ is a solution to the differential equation 
\begin{equation*}
    f'(\theta)  = \pm \sqrt{ 1 - R^2 \cos^2 (\theta  - \delta)},
\end{equation*}
and the solution $S_{\mu}$ is given by
\begin{align}\label{eq:cal-fun}
    S_{\mu}^{\pm}(\theta,x,y) &= \pm \int \sqrt{1 - R^2 \cos^2(\theta - \delta)} d \theta + R  \cos( \delta )x + R  \sin (\delta) y .
\end{align}

\begin{Prop}\label{pro:hor-vect-field}
    Let $S_{\mu}^{\pm}$ be a solution to the eikonal equation given by equation  \eqref{eq:cal-fun}, and $\gamma(t)$ be a geodesic with momentum $\mu$, then $\nabla_{hor} S_{\mu}^{\pm} = \dot{\gamma}$ whenever $\dot{\theta}$ has the same sign as the root of \eqref{eq:cal-fun}. 
\end{Prop}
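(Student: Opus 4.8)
The plan is to compute both sides of the claimed identity in the orthonormal frame $\{X_\theta, X_u\}$ and match them term by term, the only subtlety being the choice of sign. First I would record the differential of the calibration function from \eqref{eq:cal-fun}. Differentiating gives
$$dS_\mu^\pm = \pm\sqrt{1 - R^2\cos^2(\theta-\delta)}\, d\theta + R\cos\delta\, dx + R\sin\delta\, dy,$$
so that $\partial_\theta S_\mu^\pm = \pm\sqrt{1 - R^2\cos^2(\theta-\delta)}$, while $\partial_x S_\mu^\pm = R\cos\delta$ and $\partial_y S_\mu^\pm = R\sin\delta$.

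Next I would use that, because $\{X_\theta, X_u\}$ is declared orthonormal, Definition \ref{def:hor-nab} forces the horizontal gradient to be $\nabla_{hor}S_\mu^\pm = dS_\mu^\pm(X_\theta)\,X_\theta + dS_\mu^\pm(X_u)\,X_u$: testing the defining relation $\langle \nabla_{hor}S, v\rangle = dS(v)$ against $v = X_\theta$ and then $v = X_u$ reads off the two coefficients directly. I then evaluate the two pairings using \eqref{eq:left-inv}. Since $X_\theta = \partial_\theta$, the first is simply $\partial_\theta S_\mu^\pm = \pm\sqrt{1 - R^2\cos^2(\theta-\delta)}$. For the second, $X_u = \cos\theta\,\partial_x + \sin\theta\,\partial_y$ gives $dS_\mu^\pm(X_u) = R(\cos\theta\cos\delta + \sin\theta\sin\delta) = R\cos(\theta-\delta)$ by the cosine addition formula. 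Hence
$$\nabla_{hor}S_\mu^\pm = \pm\sqrt{1 - R^2\cos^2(\theta-\delta)}\,X_\theta + R\cos(\theta-\delta)\,X_u.$$

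Finally I would compare this with the velocity of a geodesic of momentum $\mu$. By the Background Theorem the horizontal lift \eqref{eq:hor-lift} reads $\dot\gamma = \dot\theta\,X_\theta + R\cos(\theta-\delta)\,X_u$, while the reduced quadrature \eqref{eq:theta-dot-sol} gives $\dot\theta = \pm\sqrt{1 - R^2\cos^2(\theta-\delta)}$. The $X_u$-components agree identically, and the $X_\theta$-components agree precisely when the sign of the radical chosen in $S_\mu^\pm$ matches the sign of $\dot\theta$, which is exactly the hypothesis. I do not expect a genuine obstacle here: the content is a short direct computation, and the only point demanding care is the bookkeeping of the $\pm$, so that the branch of $S_\mu^\pm$ is paired with the branch of $\dot\theta$ on the subinterval where $\dot\theta$ keeps a fixed sign.
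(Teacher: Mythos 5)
Your proposal is correct and follows essentially the same route as the paper: the paper expands $dS_{\mu}^{\pm}$ in the left-invariant co-frame $\{\Theta_\theta,\Theta_u,\Theta_v\}$ (using the same cosine-addition identity) and reads off the horizontal gradient, which is exactly your computation of the coefficients $dS_{\mu}^{\pm}(X_\theta)$ and $dS_{\mu}^{\pm}(X_u)$ in the orthonormal frame. Your explicit tracking of the $\pm$ branch against the sign of $\dot\theta$ via \eqref{eq:theta-dot-sol} is if anything slightly more careful than the paper's ``without loss of generality'' reduction to the positive root.
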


Before proving Proposition \ref{pro:hor-vect-field}, let us introduce the co-frame of left-invariant one-forms:
\begin{equation*}
    \Theta_\theta = d \theta ,\;\; \Theta_u = \cos \theta dx  + \sin \theta dy, \;\; \text{and} \;\; \Theta_v = -\sin \theta dx  + \cos \theta dy. 
\end{equation*}

\begin{Remark}
     Since $\Theta_{v}(X_{\theta}) = \Theta_{v}(X_{u}) = 0$, an alternative way to define the non-integrable distribution $\D$ over $\SE 2$ is as the kernel of $\Theta_{v}$. This tells us that $\SE2$ has the structure of a contact manifold. It is well known that a contact \sR structure does not have abnormal geodesics; consult \cite[Proposition 4.38]{agrachev} or \cite[sub-Chapter 5.2]{tour} for more details.
\end{Remark}

Let us prove Proposition \ref{pro:hor-vect-field}.
\begin{proof}
   
  Let $\gamma(t)$ be a geodesic of momentum $\mu$, without loss of generality, let us consider the positive root from equation \eqref{eq:cal-fun} and assume that $\dot{\theta}$ is positive; then we can write $dS^{+}_{\mu}$ in terms of the co-frame  
\begin{align*}
    dS_{\mu}^{+} &=    \sqrt{1 - R^2 \cos^2(\theta - \delta)} d \theta + R \cos (\theta - \delta)d  \Theta_u + R \sin (\theta - \delta)d  \Theta_v .
\end{align*}  
By Definition \ref{def:hor-nab}, the horizontal gradient  $\nabla_{hor} S_{\mu}^+$ is given by
$$\nabla_{hor} S_{\mu}^+ = \sqrt{1 - R^2 \cos^2(\theta - \delta)} X_{\theta} +  R \cos(\theta - \delta) X_u.$$
Comparing  equation \eqref{eq:hor-lift} with $\nabla_{hor} S_{\mu}^+$, we conclude that $\nabla_{hor} S_{\mu} = \dot{\gamma}(t)$, where we use $\dot{\theta} = \sqrt{1 - R^2 \cos^2(\theta - \delta)}$.
    
\end{proof}

Let us now introduce the definition of calibration functions.
\begin{defi} \label{def:cal-func}
   Let $M$ be a \sR manifold with distribution $\mathcal{D}$, we say that a function $S: M \to \R$ is a calibration function for the geodesic $\gamma(t)$ if the following conditions hold:
    \begin{itemize}
        \item $dS(\gamma'(t)) =1 $ for all $t$.
        
        \item $|dS(v)| \leq \|v\|_{sR}$ for all vectors $v$ in $\D$, where $\|\cdot\|_{sR}$ is the \sR norm given by $\|v\|_{sR} \coloneqq \sqrt{\braket{v,v}}$.  
    \end{itemize} 
\end{defi}

\begin{lemma}\label{lem:cal-func}
Let $\gamma(t)$ be a \sR geodesic with momentum $\mu$. If $S_{\mu}^{\pm}$ is the function given by equation \eqref{eq:cal-fun}, then $S_{\mu}^{\pm}$ is a calibration function for $\gamma(t)$ whenever $\dot{\theta}$ has the same sign that the root of equation \eqref{eq:cal-fun}. 
\end{lemma}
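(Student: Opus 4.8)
The plan is to verify directly the two defining conditions of Definition \ref{def:cal-func}, leveraging the explicit formula for the horizontal gradient already computed in the proof of Proposition \ref{pro:hor-vect-field}. The key observation is that both conditions reduce to a single fact, namely that $\nabla_{hor} S_{\mu}^{\pm}$ is a \emph{unit} horizontal vector field, combined with the defining identity $dS_q(v) = \langle \nabla_{hor}S, v\rangle_q$ for $v \in \D_q$ from Definition \ref{def:hor-nab}.

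First I would establish that $\|\nabla_{hor} S_{\mu}^{\pm}\| = 1$ everywhere. Using the expression
\[
\nabla_{hor} S_{\mu}^{\pm} = \pm\sqrt{1 - R^2\cos^2(\theta-\delta)}\, X_{\theta} + R\cos(\theta-\delta)\, X_u
\]
from the proof of Proposition \ref{pro:hor-vect-field} and the orthonormality of $\{X_{\theta}, X_u\}$, the squared norm telescopes to $\bigl(1 - R^2\cos^2(\theta-\delta)\bigr) + R^2\cos^2(\theta-\delta) = 1$. This is precisely the content of the Eikonal equation \eqref{eq:eikonal}, so this step amounts to recalling that $S_{\mu}^{\pm}$ was constructed as its solution.

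Next, for the second calibration condition I would apply the Cauchy--Schwarz inequality: for any $v \in \D_q$,
\[
|dS_{\mu}^{\pm}(v)| = |\langle \nabla_{hor} S_{\mu}^{\pm}, v\rangle_q| \le \|\nabla_{hor} S_{\mu}^{\pm}\|\,\|v\| = \|v\|.
\]
I would emphasize that this bound holds for an arbitrary geodesic and does \emph{not} invoke the sign-matching hypothesis; only the first condition requires it.

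Finally, for the first condition I would invoke Proposition \ref{pro:hor-vect-field}: under the hypothesis that $\dot{\theta}$ shares the sign of the chosen root, one has $\nabla_{hor} S_{\mu}^{\pm} = \dot{\gamma}$. Then
\[
dS_{\mu}^{\pm}(\dot{\gamma}) = \langle \nabla_{hor} S_{\mu}^{\pm}, \dot{\gamma}\rangle = \langle \dot{\gamma}, \dot{\gamma}\rangle = \|\dot{\gamma}\|^2 = 1,
\]
where the last equality uses that $\gamma$ is parameterized by arc-length (energy level $H = \tfrac{1}{2}$). I do not anticipate a genuine obstacle here; the only point requiring care is the bookkeeping of the sign convention, so that the branch of the root in \eqref{eq:cal-fun} is matched to the sign of $\dot{\theta}$ over the whole time interval and Proposition \ref{pro:hor-vect-field} applies throughout.
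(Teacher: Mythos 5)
Your proposal is correct and follows essentially the same route as the paper's own proof: both invoke Proposition \ref{pro:hor-vect-field} to identify $\nabla_{hor}S_{\mu}^{\pm}$ with $\dot{\gamma}$ for the first calibration condition, and both use the Cauchy--Schwarz inequality together with $\|\nabla_{hor}S_{\mu}^{\pm}\|=1$ (the Eikonal equation) for the second. Your explicit remarks that the unit-norm fact is exactly the Eikonal equation and that the sign-matching hypothesis is needed only for the first condition are small clarifications the paper leaves implicit, but they do not change the argument.
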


\begin{proof}
With loss of generality, let us consider the positive root from \eqref{eq:cal-fun} and $\dot{\theta} > 0$. Lemma \ref{pro:hor-vect-field} tells us that  $\dot\gamma(t)=\nabla_{hor}S_\mu^+$. Definition \ref{def:hor-nab} implies
\begin{align*}
    dS_{\mu}^+ (\dot\gamma(t) )&= \braket{\dot\gamma(t),\dot\gamma(t)} = 1,
\end{align*}
where we used the fact that $\gamma(t)$ is parametrized by arc length. We conclude that the first condition from Definition \ref{def:cal-func} is satisfied. To prove the second condition; let $v$  be an arbitrary vector in $\mathcal{D}$, then the Cauchy–Schwarz inequality implies that 
\begin{align*}
  |dS_{\mu}^+(v)| =|\langle \dot{\gamma}, v\rangle|
    \leq \|v\|_{sR}.
\end{align*}
Therefore, such $S^{\pm}_{\mu}$ is a calibration function for $\gamma(t)$ whenever $\dot{\theta}$ has the same sign as the root of \eqref{eq:cal-fun}.
\end{proof}

\subsubsection{Minimizing Method}\label{subsec:min-fuc}
The following proposition provides the conditions for a calibration function to ensure a geodesic is a metric line.
\begin{Prop}\label{prp:cal-func-region}
Let $M$ be a \sR Manifold. If $S: M \to \R$ is a $C^2$ global calibration function for the geodesic $\gamma(t)$, then $\gamma(t)$ is a metric line. 
\end{Prop}

\begin{proof}
    Let $S(q)$ be a $C^2$ global calibration function for a geodesic $\gamma(t)$ in $M$. The goal is to show that for every compact interval $[t_0,t_1]$, the curve $\gamma(t)$ is the shorter curve joining the points $\gamma(t_0) = q_0$ and $\gamma(t_1) = q_1$. 
    
    As $S(q)$ a $C^2$  globally defined function, it implies that $dS$ is a global exact 1-form. Let $\widetilde{\gamma}(t)$ be an arbitrary curve joining the points $q_0$ and $q_1$. Stoke's theorem implies
\begin{equation*}
      |\int_{\widetilde\gamma} dS| = |S(q_1)-S(q_0)| = |\int_{\gamma} dS | = |t_1-t_0| ,
\end{equation*}
where we use the first condition from Definition \ref{def:cal-func}. Furthermore, the second condition implies 
\begin{equation*}
   | \int_{\widetilde\gamma} dS|  \leq \int \|\dot{\widetilde\gamma}(t)\|_{sR} dt = \ell(\widetilde \gamma)
\end{equation*}
where $\ell(\widetilde \gamma))$ is the sub-Riemannian length of the curve $\widetilde \gamma(t)$. By the Cauchy-Schwarz equality for integrals, the equality holds if and only if $\dot{\widetilde\gamma}(t) =f \dot{\gamma}(t)$ for some scalar function $f$. In other words, $\tilde{\gamma}(t)$ is a reparameterization of the curve of $\gamma(t)$.
\end{proof}

Proposition \ref{prp:cal-func-region} suggests that to classify the metric lines on $\SE 2$, we need to study when the calibration function defined in equation \eqref{eq:cal-fun} is globally defined and $C^2$. Although the calibration function given by equation \eqref{eq:cal-fun} is not globally defined, the following proposition presents a new global calibration function. 

\begin{Prop} \label{prop:glo-cal-fun}
    The calibration function given by equation \eqref{eq:cal-fun} is not globally defined for every value of $R$. However, the following smooth function is globally defined 
    \begin{equation}\label{eq:glo-cal-fun}
        S_{\delta}^{\pm} (\theta,x,y) = \mp \cos(\theta - \delta)  + x \cos(\delta)  +  y \sin(\delta).
    \end{equation}
Moreover, $S_{\delta}^{\pm}$ is a calibration function for the heteroclinic and line geodesics with momentum $\mu$.
\end{Prop}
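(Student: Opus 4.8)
The plan is to treat the proposition's two assertions separately. For the first, I would show that $S_\mu^\pm$ from \eqref{eq:cal-fun} fails to descend to a single-valued function on $\SE2$; for the second, I would verify directly that $S_\delta^\pm$ from \eqref{eq:glo-cal-fun} is a smooth global solution of the Eikonal equation \eqref{eq:eikonal} and then invoke the same reasoning as in Lemma \ref{lem:cal-func} to conclude it calibrates the geodesics with $R=1$.

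For the first assertion, I would note that the terms $R\cos(\delta)x+R\sin(\delta)y$ are already globally defined smooth functions on $\SE2$, so the global definiteness of $S_\mu^\pm$ is equivalent to the antiderivative $f(\theta)=\pm\int\sqrt{1-R^2\cos^2(\theta-\delta)}\,d\theta$ descending to a single-valued function of $\theta$ taken modulo $2\pi$. I argue by cases. If $R>1$, the radicand $1-R^2\cos^2(\theta-\delta)$ is negative for $\theta$ outside the Hill intervals of Lemma \ref{lem:alp-clas}, so $f$ is not even real-valued on all of $\mathbf{S}^1$. If $0\leq R\leq 1$, the integrand is real, non-negative, and has strictly positive integral over a full period, so $f(\theta+2\pi)-f(\theta)\neq 0$ and $f$ is not $2\pi$-periodic; hence $S_\mu^\pm$ is multivalued. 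In every case $S_\mu^\pm$ is not globally defined.

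For the second assertion, a direct substitution of $S_\delta^\pm$ into \eqref{eq:eikonal} gives $\big(\partial_\theta S_\delta^\pm\big)^2+\big(\cos\theta\,\partial_x S_\delta^\pm+\sin\theta\,\partial_y S_\delta^\pm\big)^2=\sin^2(\theta-\delta)+\cos^2(\theta-\delta)=1$, so $S_\delta^\pm$ solves the Eikonal equation, and since it is assembled from $\cos(\theta-\delta)$ and linear terms it is manifestly smooth and single-valued on $\SE2$. Computing in the left-invariant coframe exactly as in Proposition \ref{pro:hor-vect-field}, I obtain $\nabla_{hor}S_\delta^\pm=\pm\sin(\theta-\delta)X_\theta+\cos(\theta-\delta)X_u$. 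Comparing with \eqref{eq:hor-lift} at $R=1$, where $\dot{\gamma}=\dot{\theta}X_\theta+\cos(\theta-\delta)X_u$ and $\dot{\theta}=\pm|\sin(\theta-\delta)|$, I would check that on each heteroclinic orbit (confined to one of the intervals $(\delta,\delta+\pi)$ or $(\delta+\pi,\delta+2\pi)$ between consecutive equilibria) and at each line-type equilibrium the sign of $\dot{\theta}$ agrees with $\sin(\theta-\delta)$ for the appropriate choice of sign, so that $\dot{\gamma}=\nabla_{hor}S_\delta^\pm$ along the entire geodesic. The first calibration condition of Definition \ref{def:cal-func} then reads $dS_\delta^\pm(\dot{\gamma})=\langle\nabla_{hor}S_\delta^\pm,\dot{\gamma}\rangle=\|\dot{\gamma}\|^2=1$, and the second, $|dS_\delta^\pm(v)|\leq\|v\|$, follows from $\|\nabla_{hor}S_\delta^\pm\|=1$ together with Cauchy--Schwarz, precisely as in Lemma \ref{lem:cal-func}.

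The main obstacle is conceptual rather than computational: one must see why the repair works exactly at $R=1$. The point is that $1-R^2\cos^2(\theta-\delta)$ becomes the perfect square $\sin^2(\theta-\delta)$ only when $R=1$, which is what permits replacing the non-periodic antiderivative $\int|\sin(\theta-\delta)|\,d\theta$ hidden in $S_\mu^\pm$ by the globally smooth, $2\pi$-periodic function $\mp\cos(\theta-\delta)$ without altering $(\partial_\theta S)^2$. The remaining care is the sign bookkeeping: since $\partial_\theta S_\delta^\pm=\pm\sin(\theta-\delta)$ is the \emph{signed} sine rather than $\pm|\sin(\theta-\delta)|$, a single $S_\delta^\pm$ calibrates only the family of heteroclinic orbits all flowing toward one of the two equilibria, and the two sign choices together exhaust the heteroclinic and line-type geodesics with $R=1$.
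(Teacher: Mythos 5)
Your proposal is correct and follows essentially the same route as the paper's proof: the same two-case analysis for non-global-definedness (for $R>1$ the radicand fails outside the Hill region, while for $R\leq 1$ the antiderivative has nonzero monodromy around the non-contractible loop, i.e.\ $dS_{\mu}^{\pm}$ is not exact), and the same verification that $S_{\delta}^{\pm}$ calibrates the line and heteroclinic geodesics via the Background Theorem together with the Cauchy--Schwarz inequality. Your sign bookkeeping (noting that $\partial_\theta S_\delta^\pm = \pm\sin(\theta-\delta)$ is the signed sine, so each sign choice calibrates exactly the heteroclinic orbits flowing toward one of the two equilibria) is in fact slightly more explicit than the paper's \enquote{without loss of generality} treatment of a single branch.
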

\begin{proof} 
     We notice that by construction, the differential one-form $dS_{\mu}^{\pm}$, given by equation \eqref{eq:cal-fun}, for a parameter $\mu$ is only defined in the closed region $\Omega_{\mu}:= Hill(\mu) \times \R^2$, where $Hill(\mu) $ is the Hill region from Definition \ref{def:hill-interval}. If $R>1$, then Lemma \ref{lem:alp-clas} implies that $\Omega_{\mu}$ is a proper subset of $\SE2$. So $S_{\mu}^{\pm}$ is not globally defined.   

     If $R \leq 1$, the differential one-form $dS_{\mu}$ is globally defined. However, the one-form $dS_{\mu}$ is not exact since the integral over a non-contractible loop is not zero. Indeed, if $\gamma(t)$ is a non-contractible loop in $\SE2$, then  
    \begin{equation*}
         \int_{\gamma} dS_{\mu}^{\pm} = \pm \int_0^{2\pi} \sqrt{1-R^2\cos^2(\theta-\delta)} d \theta \neq 0.
    \end{equation*}

    The function $S_{\delta}^{\pm}$, given by equation \eqref{eq:glo-cal-fun}, is globally defined and smooth by construction. Let us verify that $S_{\delta}^{\pm}$ is a calibration function: Let $\gamma(t)$ be a geodesic of the type line, then $\theta(t) = \delta$ for all time $t$ and the Background Theorem implies
     $$ \dot{\gamma}(t) = \cos(\delta) \frac{\partial}{\partial x} + \sin(\delta) \frac{\partial}{\partial y}.  $$
     Therefore $dS^{\pm}_{\delta}(\dot{\gamma}) = 1$. 

     Let $\gamma(t)$ be a \sR geodesic of the type heteroclinic. Without loss of generality, let us assume that $\gamma(t)$ is built by a solution to the reduced Hamiltonian system with initial condition in $\alpha_{\mu}^1 \cap \alpha_{\mu}^+$, i.e., $\theta(t)  \in (\delta,\delta+\pi)$ and $\dot{\theta}(t) > 0$. Then, the Background Theorem implies 
     \begin{equation*}
         \begin{split}
        \dot{\gamma}(t) & = \sqrt{1 -  \cos^2(\theta(t) - \delta)} X_{\theta} +  \cos(\theta(t) - \delta) X_u \\
                        & = \sin(\theta(t) - \delta) X_{\theta} + \cos(\theta(t) - \delta) X_u .
         \end{split}
     \end{equation*}
     We notice that $ 0 \leq \sin(\theta(t) - \delta)$, since $\theta(t)$ lays in the interval $[\delta,\delta+\pi]$.  Therefore $dS^{+}_{\delta}(\dot{\gamma}) = 1$.
     
     We conclude that $S_{\delta}^{+}$ satisfies the first condition from Definition \ref{def:cal-func} for geodesics of the type heteroclinic and line. The Cauchy–Schwarz inequality yields the second condition. Therefore, $S_{\delta}^{\pm}$ is a calibration function for geodesics of type heteroclinic and line with momentum $\mu$.    
\end{proof}

We are ready to prove that geodesics of the type heteroclinic and line are metric lines.
 \begin{Prop}  \label{Prop: Heteroclinic geod minimizing }
     Geodesics of the type line and heteroclinic metric lines. 
     \end{Prop}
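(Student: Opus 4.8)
The plan is to combine the two propositions just established. Proposition \ref{prp:cal-func-region} shows that whenever we have a $C^2$ global solution of the Eikonal equation, the integral curves of its horizontal gradient are metric lines. Proposition \ref{prop:glo-cal-fun} provides exactly such a function for the case $R=1$, namely $S_{\delta}^{\pm}$ from equation \eqref{eq:glo-cal-fun}, which is globally defined, smooth, and a genuine calibration function for the geodesics of line and heteroclinic type. So the core of the argument is to verify that every geodesic of line or heteroclinic type arises as an integral curve of $\nabla_{hor} S_{\delta}^{\pm}$ for the appropriate choice of sign and of $\delta$, and then quote Proposition \ref{prp:cal-func-region}.

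Concretely, I would first recall from the classification in sub-sub-Section \ref{subsubsec:clas-geo} that both the line and heteroclinic types correspond to $R=1$, so the relevant curve is $\alpha_{\mu}$ with $R=1$ and the calibration function is $S_{\delta}^{\pm}$. Next I would invoke the computation already carried out inside the proof of Proposition \ref{prop:glo-cal-fun}: for a line geodesic, $\theta(t)\equiv\delta$ gives $\dot{\gamma}=\cos(\delta)\partial_x+\sin(\delta)\partial_y$, and for a heteroclinic geodesic with initial condition on $\alpha_{\mu}^1$ and $\dot\theta>0$ one has $\dot{\gamma}=\sin(\theta-\delta)X_\theta+\cos(\theta-\delta)X_u$. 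In both cases one checks $\dot{\gamma}=\nabla_{hor}S_{\delta}^{+}$ by matching against the horizontal gradient formula; for the branch with $\dot\theta<0$ or for the heteroclinic orbits lying on $\alpha_{\mu}^2$ one uses instead $S_{\delta}^{-}$ (or shifts $\delta$ by $\pi$), as permitted by the symmetries in Lemma \ref{lem:sym-red-ham}. Thus each line or heteroclinic geodesic is an integral curve of the horizontal gradient of a global $C^2$ Eikonal solution.

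Having identified the geodesics with integral curves of $\nabla_{hor}S_{\delta}^{\pm}$, Proposition \ref{prp:cal-func-region} immediately yields that they are metric lines, completing the proof.

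The main obstacle I anticipate is purely bookkeeping rather than conceptual: ensuring that the correct sign branch of $S_{\delta}^{\pm}$ is selected so that the first calibration condition $dS(\dot\gamma)=1$ holds for the whole orbit. A heteroclinic orbit sweeps $\theta$ monotonically across the Hill interval $[\delta,\delta+\pi]$, on which $\sin(\theta-\delta)\geq 0$, so the positive root is consistent along the entire trajectory; the subtle point is to confirm that no sign change of $\dot\theta$ occurs in the interior (which would force a switch of branch and break global calibration). The classification guarantees $\dot\theta\neq 0$ on the open interval for heteroclinic orbits, so this is safe, but it must be stated explicitly. A secondary point worth a sentence is that, although $S_{\delta}^{\pm}$ calibrates only these particular orbits, that is exactly enough: by Proposition \ref{prp:cal-func-region} being an integral curve of a global $C^2$ Eikonal solution is sufficient for the global minimizing property, so we do not need the single function $S_{\delta}^{\pm}$ to calibrate \emph{all} geodesics simultaneously.
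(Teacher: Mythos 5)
Your proposal is correct and follows essentially the same route as the paper, whose entire proof is to combine Propositions \ref{prp:cal-func-region} and \ref{prop:glo-cal-fun}. The additional bookkeeping you supply (matching line and heteroclinic geodesics with integral curves of $\nabla_{hor}S_{\delta}^{\pm}$ and tracking the sign branch) is exactly the verification the paper delegates to the proof of Proposition \ref{prop:glo-cal-fun}, so you are simply making explicit what the paper leaves implicit.
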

\begin{proof}
Let $\gamma(t)$ be a \sR geodesic either of the type line or heteroclinic with momentum $\mu$. Propositions \ref{prp:cal-func-region} implies that the function $S_{\delta}^{\pm}$ is a smooth global calibration function for $\gamma(t)$. So, Proposition \ref{prop:glo-cal-fun} implies the desired result.
\end{proof}

\subsection{Cut times in the Special Euclidean group}\label{sec:cut-time}

We just completed the first part of Theorem \ref{main:the-1} by proving Proposition \ref{Prop: Heteroclinic geod minimizing }. To finish the proof, we will show that geodesics of the type $\theta$-periodic fail to qualify as metric lines. To do so, we will prove that their $\theta$-period is an upper bound for their cut time. Let us formalize the definition of the cut time.
\begin{defi}
    Let $\gamma(t)$ be a \sR geodesic parameterized by arc length. We define the cut time of $\gamma(t)$ as \begin{equation*}
        t_{cut}(\gamma) = \sup\{ t > 0 \; | \; \gamma|_{[0,t]} \; \text{is length minimizing}\}.
    \end{equation*}
\end{defi}

If two geodesics with the same initial point touch again after a positive time. In that case, they fail to minimize after this time. Refer to \cite[Lemma 5.2]{comparison-theorem} for the \Ri case. The following proposition demonstrates this is the case for the type $\theta$-periodic geodesics.

\begin{figure}
     \centering
     \begin{subfigure}[b]{0.3\textwidth}
         \centering
         \includegraphics[width=1\textwidth]{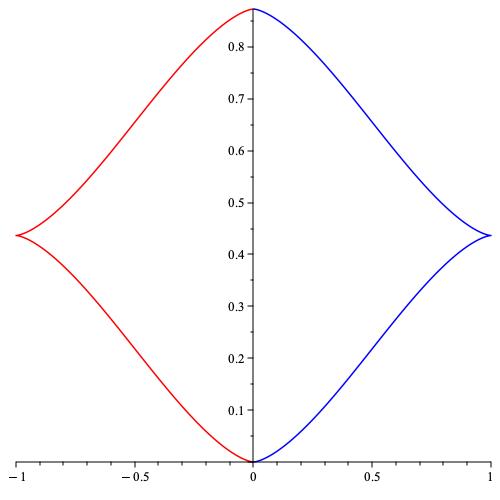}
         \caption{$R>1$}
     \end{subfigure}
     \hfill
     \begin{subfigure}[b]{0.3\textwidth}
         \centering
         \includegraphics[width=1\textwidth]{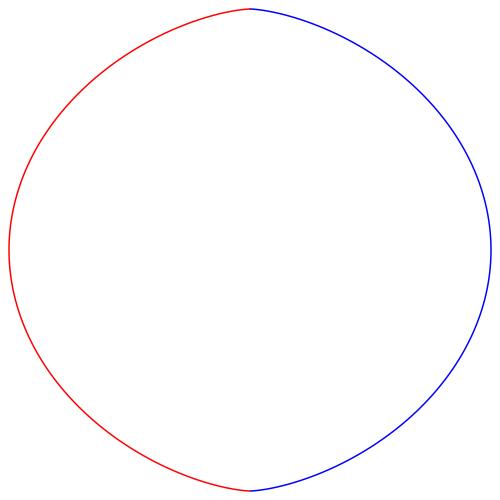}
         \caption{$R=1$}
     \end{subfigure}
        \caption{The panels show the cut points for each case}
        \label{fig:cut-points}
\end{figure}

\begin{Prop}\label{prop:cut-time}
If a geodesic is $\theta$-periodic with period $L(\mu)$, then $L(\mu)$ is an upper bound for the cut time. 
\end{Prop}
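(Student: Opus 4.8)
The plan is to show that a $\theta$-periodic geodesic $\gamma$ ceases to minimize once it completes one full $\theta$-period by exhibiting a \emph{second}, distinct unit-speed geodesic that leaves $p_{0} := \gamma(0)$ and arrives at $q := \gamma(L)$ at exactly the same time $L$. Granting two distinct arc-length geodesics joining $p_{0}$ to $q$, a standard switching argument then finishes the proof: if $\gamma|_{[0,t_{1}]}$ were minimizing for some $t_{1} > L$, then the concatenation of the second geodesic on $[0,L]$ with $\gamma|_{[L,t_{1}]}$ would be a curve of length $t_{1} = dist_{\SE2}(p_{0},\gamma(t_{1}))$, hence also minimizing; but this concatenation has a corner at $q$, contradicting the smoothness of minimizers. (Smoothness is available because $\SE2$ is contact, so there are no abnormal geodesics and every minimizer is a smooth normal geodesic, as noted after Definition \ref{def:hor-nab}.) This yields $t_{cut}(\gamma) \le L$.

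To build the second geodesic I would use the time-reversibility of the reduced Newtonian system \eqref{eq:red-ham-eq}, already exploited in the proof of Lemma \ref{lem:sym-red-ham}: if $(p_{\theta}(t),\theta(t))$ solves the reduced equations, so does $(p_{\theta}^{-}(t),\theta^{-}(t)) := (-p_{\theta}(-t),\theta(-t))$, for every value of $R$. Feeding this reversed reduced solution into the prescription of the Background Theorem \ref{the:back-grou} with the same initial group element $\gamma(0)$ produces an arc-length geodesic $\gamma^{-}$ with $\gamma^{-}(0)=\gamma(0)$. Choosing the base point in the interior of the Hill interval, where $p_{\theta}(0)\neq 0$, the two initial velocities differ exactly in their $X_{\theta}$-component, $\dot\gamma^{-}(0)-\dot\gamma(0) = -2\,p_{\theta}(0)\,X_{\theta}$, so $\gamma^{-}\neq\gamma$; the identical computation at $t=L$ will show that the corner at $q$ is genuine.

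It then remains to verify $\gamma^{-}(L)=\gamma(L)$. The $\SO2$-component returns on its own: for $R\neq 1$ the reduced orbit is $L$-periodic, so $\theta^{-}(L)=\theta(-L)\equiv\theta(L)\pmod{2\pi}$ and $p_{\theta}^{-}(L)=-p_{\theta}(0)$. For the planar part $c:=\pi_{\R^{2}}\circ\gamma$, Lemma \ref{lem:curv} gives $\dot c = R\cos(\theta-\delta)(\cos\theta,\sin\theta)$, and $\theta^{-}(t)=\theta(-t)$ yields $\dot c^{-}(t)=\dot c(-t)$, hence $c^{-}(t)=2c(0)-c(-t)$; that is, $c^{-}$ is the point reflection of $c(-\,\cdot\,)$ about the common initial point. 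Evaluating at $t=L$ and using that the displacement over any period-length interval equals $(\Delta x(\mu),\Delta y(\mu))$, independent of the initial point by Proposition \ref{prp:period-map}, gives $c^{-}(L)=2c(0)-\bigl(c(0)-(\Delta x(\mu),\Delta y(\mu))\bigr)=c(0)+(\Delta x(\mu),\Delta y(\mu))=c(L)$. Combining the $\SO2$- and $\R^{2}$-components gives $\gamma^{-}(L)=\gamma(L)=q$.

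The main obstacle is this last step: one must ensure the reversed geodesic rejoins the original in the full group $\SE2$ and not merely in the reduced quotient $T^{*}\SO2$. Matching the $\SO2$-component is automatic from $L$-periodicity, so the real content is the matching of the $\R^{2}$-displacement, which is precisely where the point-reflection identity for $c$ together with the base-point independence of $(\Delta x,\Delta y)$ from Proposition \ref{prp:period-map} enter. Everything else — distinctness of the two geodesics, reality of the corner at $q$, and the final non-minimality — is routine once $p_{\theta}(0)\neq 0$ is arranged and the contact structure is invoked to guarantee smoothness of minimizers.
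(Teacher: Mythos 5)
Your handling of the main case is correct and is essentially the paper's own argument: the paper likewise constructs the reversed geodesic from the reduced initial condition $(-p_\theta(0),\theta_0)$ via time-reversibility (Lemma \ref{lem:sym-red-ham}), matches the endpoints at time $L$ using the base-point independence of $(\Delta x(\mu),\Delta y(\mu))$ from Proposition \ref{prp:period-map}, and concludes non-minimality past $L$ from the existence of two distinct geodesics meeting at $\gamma(L)$. Your corner-plus-smoothness argument is a reasonable expansion of that last step, which the paper delegates to a citation.

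However, there is a genuine gap: the proposition must hold for \emph{every} $\theta$-periodic geodesic with its given parameterization, and when $R>1$ (inflection type) the geodesic may start exactly at an endpoint of the Hill interval, where $p_\theta(0)=0$. At such a starting point your construction degenerates: the reversed reduced solution $(-p_\theta(-t),\theta(-t))$ has the \emph{same} initial condition $(0,\theta_0)$ as the original, so by uniqueness $\gamma^-=\gamma$, no second geodesic is produced, and there is no corner. You cannot simply \enquote{arrange} $p_\theta(0)\neq 0$, because the cut time is defined relative to the curve starting at $\gamma(0)$; this is precisely why the paper splits its proof into two cases and, for $\dot\theta(0)=0$, argues instead that $\gamma(L)$ is conjugate to $\gamma(0)$ via a Jacobi field built from the Killing fields $\frac{\partial}{\partial x}$ and $\frac{\partial}{\partial y}$. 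Your approach could be repaired without that machinery: if $\gamma|_{[0,t_1]}$ were minimizing for some $t_1>L$, choose $t_0\in(0,t_1-L)$ with $p_\theta(t_0)\neq 0$ (the zeros of $p_\theta$ occur at isolated times), and apply your two-geodesic argument to the subsegment starting at $\gamma(t_0)$; this shows $\gamma|_{[t_0,t_0+L+\epsilon]}$, hence $\gamma|_{[0,t_1]}$, is not minimizing. As written, though, the case $p_\theta(0)=0$ is simply absent, so the proof is incomplete.
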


\begin{proof}
Let $\gamma(t)$ be an $L$ periodic sub-Riemannian geodesic with momentum $\mu$ and initial condition $\gamma(0) = (\theta_0,x_0,y_0)$, we will consider two cases when $\dot{\theta}_0 \neq 0$ and $\dot{\theta}_0 = 0$.  We remark that the second case only corresponds to geodesics of the type inflection $(R>1)$.

Case $\dot{\theta}_0 \neq 0$: there are exactly two geodesics with initial condition $\gamma(0)$ and momentum $\mu$, namely $\gamma(t)$ and $\tilde{\gamma(t)}$, the latter of which is defined as the one whose reduced dynamics is a solution to the Hamiltonian $H_{\mu}$ and have the initial condition $(\tilde{p}_{\theta}(0),\tilde{\theta}(0)) = (-\dot{\theta}_0,\theta_0)$. The time reversibility (see Lemma \ref{lem:sym-red-ham}) implies $\theta(t) = \tilde{\theta}(-t)$ for all $t$, and the periodicity gives $\theta(L) = \tilde{\theta}(L)$. 

Moreover, we claim that $\gamma(L) = \tilde{\gamma}(L)$. This is followed by Proposition \ref{prp:period-map}, which states that the difference in $\tilde{x}$ and $\tilde{y}$ over the period $L$ is the same as the difference for $\tilde{x}$ and $\tilde{y}$. That is to say 
\begin{align*}
\gamma(L) =  \gamma(0) + (0,\Delta x(\mu), \Delta y(\mu)) =\tilde{\gamma}(L)
\end{align*}
Therefore, we have constructed two distinct geodesics meeting at time $L$, implying that any geodesics of the type $\theta$-periodic fails to minimize past its period $L$. 

The second part of the proof relies on the basic theory of Jacobi fields and conjugate points. For readers unfamiliar with the subject, refer to \cite[sub-Chapter 4.8]{elem-diff-geo}, or \cite[Chapter 10]{lee-riem-man}. 

Let $\mu$ be such that $R>1$, and let us consider an initial condition $\dot{\theta}(0) = 0$. We will show that $\gamma(L)$ is conjugate to $\gamma(0)$ along $\gamma(t)$, thus $\gamma(t)$ is not minimizing past $\gamma(L)$. To do so, we will construct a killing vector field, which by general theory is a Jacobi field when restricted to a geodesic. By \cite[Section 3]{RM-ABD}, a vector field $K$ is a killing vector field if and only if its momentum function $P_K$ Poisson commutes with the Hamiltonian $H_{sR}$, i.e., $\{H_{sR},P_K\} = 0$. It follows that $\frac{\partial}{\partial x}$ and $\frac{\partial}{\partial y}$ are killing vector fields since $H_{sR}$ is invariant under the Hamiltonian action of $\R^2$, as we discussed above. 

Consider the two Jacobi fields on $\gamma(t)$: 
\begin{align*}
    W_1(t) &= \cos  (\theta_0) \dfrac{\partial }{\partial x} + \sin (\theta_0) \dfrac{\partial}{\partial y}  \; \; \text{ restricted to } \gamma, \;  \text{and}\\
    W_2(t) & = \dot{\gamma}(t). 
\end{align*}
 We see that at $t = kL$, we have $W_2(kL) = X_u(\gamma(kL)) = \cos(\theta_0) \frac{\partial}{\partial x} + \sin(\theta_0) \frac{\partial}{\partial y}$. Therefore, $W_1(0) = W_1(L) = W_2(0) = W_2(L)$. Thus the Jacobi Field $J = W_1 - W_2 $ vanishes at $t=0$ and $t=L$. Moreover, $J$ is not trivial since $\dot{\theta}(t) \neq 0$ along $0 < t < L/2$ and $L/2 < 0 < L$. At the time $t= L/2$, we have that 
 \begin{equation*}
     W_2(L/2) =  \cos(\theta(L/2))\dfrac{\partial}{\partial x} + \sin(\theta( L/2)) \dfrac{\partial}{\partial y}  \neq W_1(L/2).
 \end{equation*} We just shown that $\gamma(L)$ is a conjugate point and fails to minimize beyond $t = L$. 
\end{proof}

\subsection{Proof of Theorem \ref{main:the-1}} \label{Sec:proof-the-main-2}

\begin{proof}
Proposition \ref{Prop: Heteroclinic geod minimizing } shows that the geodesic of type line and heteroclinic geodesics are metric lines. Proposition \ref{prop:cut-time} implies that geodesics of type $\theta$-periodic do not qualify as metric lines since the $\theta$-period $L(\mu)$ is an upper-bound for its cut time. 

Therefore, we conclude that the metric lines in $\SE2$ are precisely the geodesics of the type line and heteroclinic.     
\end{proof}

\section{Conclusion and Future Work}

Conclusions: We studied the \sR geodesic flow on the Special Euclidean group by performing the symplectic reduction of $T^*\SE2$ by $\R^2$, where the metabelian structure of $\SE2$ plays a primary role. We classified \sR geodesics according to their reduced dynamics and selected the candidates to be metric lines. We solved the Hamilton-Jacobi equation and found a calibration function for every geodesic. However, the calibration function was not globally defined. Finding a global calibration function proved that heteroclinic and line geodesics are metric lines. We show that $\theta$-periodic geodesic are not metric lines by providing an upper bound for the cut time.

In the future, we hope to work: 
\begin{enumerate}
    \item Extend our characterizations of the metric lines and periodic geodesics to $\Euclid3$, or more generally to $\Euclidn$.
    \item Study the eigenvalue problems and fundamental solutions of the \sR Laplace, Heat, and Schrödinger operators on $\SE 2$.  
\end{enumerate}

\appendix

\section{Ma\~{n}e's critical value}\label{ape:mane}
In this section, we will show that the calibration function defined in Proposition \ref{prop:glo-cal-fun} corresponds to the Ma\~{n}e's critical value. The Ma\~{n}e's critical value is a fundamental concept from KAM theory. We will follow the approach from \cite{aub-Ham-eq} by A. Figalli and L. Rifford. For a more extensive explanation, refer to \cite{fathi2008weak}. 

Let $M$ be a smooth, connected, and compact Riemannian manifold without boundary. Let $H: T^*M \to \R$ be a Hamiltonian function and $||\cdot||^*_x$ be the norm on $T^*_xM$. We say that $H$ is a \textbf{Tomelli Hamiltonian} if the following conditions are satisfied:

\begin{itemize}
    \item Superlinear growth: For every $K \geq 0$ there exist a constant $C^*(K)$ such that 
    $$ H(x,p) \geq K ||p||^*_x - C^*(K) \quad \text{for all}\; (p,x) \in T^*M. $$

    \item Uniform convexity: For every $(p,x)$ in $T^*M$, the second derivative along the fiber $\frac{\partial^2}{\partial p^2}$ is positive definite. 
\end{itemize}

Notice that reduced Hamiltonian $H_{\mu}$, given by equation \eqref{eq:red-ham}, is a Tonelli Hamiltonian for all value $\mu$ in $\R^2$. The Ma\~{n}e's critical value can be defined as follows.

\begin{defi}
    We say that a real number $c[H]$ is the Ma\~{n}e's critical value of a Hamiltonian function $H$, if  $c[H]$ is the infimum of the values $c$ in $\R$ for which there exist a function $S: M \to \R$ of class $C^1$ satisfying 
    \begin{equation}\label{eq:mane-def}
     H(dS,x) \leq c \quad \text{for all}\; x \in M.   
    \end{equation}

\end{defi}

If the Tonelli Hamiltonian corresponds to a mechanical system, i.e., $H$ has the form
$$ H(p,x) = \frac{1}{2} ||p||^2_* + V(x) \quad \text{for all}\; (p,x) \in T^*M, $$
where $V:M \to \R$ is a potential function of class $C^k$ for $k\geq 2$. Then, it is easy to check that 
$$c[H] = \max_{x \in M} V(x). $$

In the case of a reduced Hamiltonian $H_{\mu}$, we have $\frac{R^2}{2} = c[H_{\mu}]$. The following proposition links the Ma\~{n}e's critical value and a calibration function for a curve over an infinite interval. 

\begin{Prop}\label{prp:mane-cri}
     Suppose $S: M \to \R$ is a calibration function for a curve $\gamma(t)$ over an infinite interval. In that case, $S$ satisfies the inequality from equation \eqref{eq:mane-def} where $c$ is equal to Ma\~{n}e's critical value $c[H]$.
\end{Prop}

Refer to \cite[Proposition 4.3.6]{fathi2008weak} for the proof of Proposition \ref{prp:mane-cri}. We conclude that reduced Hamiltonian function $H_{\mu}$ has a globally minimizing solution of action if $H(p(t),\theta(t)) = \frac{R^2}{2}$. It is easy to show that \sR geodesics whose reduce dynamics has energy $H(p(t),\theta(t)) = \frac{R^2}{2}$ are reparametrization of geodesics of the type heteroclinic and line.

\bibliographystyle{plain}
\bibliography{bibliography.bib}

\end{document}